\newcommand{\comment}[1]{}
\newcommand{\Norm}[1]{{\left\|{#1}\right\|}}
\newcommand{\PP}{{\mathcal P}}
\newcommand{\Pn}{{\mathcal P}_n}
\newcommand{\PK}{{\mathcal P}_n(K)}
\newcommand{\ze}{\zeta}
\newcommand{\DK}{\partial K}
\newcommand{\rj}{r_j e^{i\varphi_j}}
\newcommand{\sj}{{\frac{\sin\varphi_j}{r_j}}}
\newcommand{\LL}{{\mathcal L}}
\newcommand{\KS}{K^{\star}}
\newcommand{\HH}{{\mathcal H}}
\newcommand{\HP}{{\mathbb H}}
\newcommand{\Z}{{\mathcal Z}}
\newcommand{\ZS}{{\mathcal Z}^{*}}
\newcommand{\W}{{\mathcal W}}
\newcommand{\RR}{{\mathbb R}}
\newcommand{\CC}{{\mathbb C}}
\newcommand{\ZZ}{{\mathbb Z}}
\newcommand{\NN}{{\mathbb N}}
\newcommand{\TT}{{\mathbb T}}
\newcommand{\DD}{{\mathbb D}}
\newcommand{\II}{{\mathbb I}}
\newcommand{\CR}{{\mathcal C}}
\newcommand{\SC}{{\mathcal S}}
\newcommand{\essinf}{ \mathop{\mathrm{ ess\,inf}}}
\newcommand{\intt}{ \mathop{\mathrm{int}}}
\newcommand{\con}{ \mathop{\mathrm{con}}}
\newcommand{\diam}{ \mathop{\mathrm{diam}}}
\newcommand{\width}{ \mathop{\mathrm{width}}}
\newtheorem{theorem}{Theorem}
\newtheorem{lemma}{Lemma}
\theoremstyle{claim}
\newtheorem{proposition}{Proposition}
\newtheorem{definition}{Definition}
\newtheorem{conjecture}{Conjecture}
\theoremstyle{definition}
\newcommand{\de}{\delta}
\newcommand{\tht}{\theta}
\newcounter{othm}
\def\theothm{\Alph{othm}}
\newenvironment{othm}{
  \em
  \vskip 0.10in
  \refstepcounter{othm}
  \noindent{\bf Theorem\ \theothm}
}{\vskip 0.10in}
\newenvironment{olemma}{
  \em
  \vskip 0.10in
  \refstepcounter{othm}
  \noindent{\bf Lemma\ \theothm}
}{\vskip 0.10in}
\newcounter{rev}
\newcounter{rep}
\begin{document}

\title[$L^q$ Tur\'an inequalities on Er\H od domains]{Tur\'an type converse Markov inequalities in $L^q$ \\ on a generalized Er\H od class of convex domains}

\author{Polina Yu. Glazyrina, Szil\'ard Gy. R\'ev\'esz}

\address
{Polina Yu. Glazyrina \newline  \indent Institute of Mathematics and Computer Sciences, Ural Federal University, \newline  \indent Ekaterinburg, RUSSIA \newline  \indent  and \newline  \indent
Institute of Mathematics and Mechanics, \newline  \indent Ural Branch of the Russian Academy of Sciences, \newline  \indent Ekaterinburg, RUSSIA}
\email{polina.glazyrina@urfu.ru}

\address{Szil\'ard Gy. R\'ev\'esz \newline  \indent Institute of Mathematics, Faculty of Sciences \newline \indent Budapest University of Technology and Economics \newline  \indent Budapest, M\H uegyetem rkp. 3-9. \newline \indent 1111 HUNGARY \newline  \indent and \newline  \indent A. R\'enyi Institute of Mathematics \newline \indent Hungarian Academy of Sciences, \newline \indent Budapest, Re\'Altanoda utca 13-15. \newline \indent 1053 HUNGARY} \email{revesz@renyi.mta.hu}

\date{\today}

\maketitle

\let\oldfootnote\thefootnote
\def\thefootnote{}
\footnotetext{} \footnotetext{This work was supported by the Program for State Support  of Leading Scientific Schools  of the Russian Federation (project no. NSh-9356.2016.1) and by the Competitiveness Enhancement Program of the Ural Federal University (Enactment of the Government of the Russian Federation no. 211 of March 16, 2013, agreement no. 02.A03.21.0006 of August 27, 2013)
and by Hungarian Science Foundation Grant \#'s 
 K-119528, NK-104183, K-109789.}
\let\thefootnote\oldfootnote

\bigskip
\bigskip

{\bf MSC 2000 Subject Classification.} Primary 41A17. Secondary 30E10, 52A10.

{\bf Keywords and phrases.} {\it Bernstein-Markov Inequalities,
Tur\'an's lower estimate of derivative norm, logarithmic
derivative, Chebyshev constant, transfinite diameter, capacity,
outer angle, convex domains, Blaschke Rolling Ball Theorems,
Er\H od domain, Gabriel Lemma.}

\begin{abstract}
P. Tur\'an was the first to derive lower estimations on the  uniform norm of the derivatives of polynomials $p$ of uniform norm $1$ on the interval $\II:=[-1,1]$  and the disk $\DD:=\{z\in\CC~:~|z|\le 1\}$, under the normalization condition that the zeroes of the polynomial $p$ in question all lie in $\II$ or $\DD$, resp. Namely, in 1939 he proved that with $n:=\deg p$ tending to infinity, the precise growth order of the minimal possible derivative norm is $\sqrt{n}$ for $\II$ and $n$ for $\DD$.

Already the same year J. Er\H od considered the problem on other domains.
In his most general formulation, he extended Tur\'an's order $n$ result on $\DD$ to a certain general class of piecewise smooth convex domains. Finally, a decade ago the growth order of the minimal possible norm of the derivative was proved to be $n$ for all compact convex domains.

Tur\'an himself gave comments about the above oscillation question in $L^q$ norm on $\DD$. Nevertheless, till recently results were known only for $\II$, $\DD$ and so-called $R$-circular domains.
Continuing our recent work, also here we investigate the Tur\'an-Er\H od problem on general classes of domains.
\end{abstract}


\tableofcontents



\section{Introduction}\label{sec:intro}

Denote by $K\subset \CC$ a compact subset of the complex plane,
with the most notable particular cases being the unit disk
$\DD:=\{ z \in \CC ~:~ |z|\le 1\}$ and the unit interval
$\II:=[-1,1]$.

As a kind of converse to the classical inequalities of
Bernstein \cite{Bernstein, Bernstein_com, Riesz} and Markov
\cite{Markov} on the upper estimation of the norm of the
derivative of polynomials, in 1939 Paul Tur\'an \cite{Tur}
started to study converse inequalities of the form
$$\Norm{p'}_K\ge c_K n^A \Norm{p}_K.
$$
 Clearly such a converse
can only hold if further restrictions are imposed on the
occurring polynomials $p$. Tur\'an assumed that all zeroes of
the polynomials belong to $K$. So denote the set of complex
(algebraic) polynomials of degree (exactly) $n$ as $\PP_n$, and
the subset with all the $n$ (complex) roots in some set
$K\subset\CC$ by $\PK$.

Denote by $\Gamma$ the boundary of $K$. The (normalized)
quantity under our study in the present paper is the ``inverse
Markov factor" or ``oscillation factor"
\begin{equation}\label{Mdef}
M_{n,{q}}(K):=\inf_{p\in \PK} \frac{\Norm{p'}_{L^q(\Gamma)}}{\Norm{p}_{L^q(\Gamma)}},
\end{equation}
where, as usual,
\begin{equation}
\label{Oscillationdef}
\begin{aligned}
\Norm{p}_{q}:&=\Norm{p}_{L^q(\Gamma)}:=\left(\int_{\Gamma} |p(z)|^q|dz|\right)^{1/q},
\quad (0<q<\infty) \notag
\\ \Norm{p}_K:=\|p\|_\infty:&=\Norm{p}_{L^\infty(\Gamma)}:=\sup_{z\in \Gamma} |p(z)|=\sup_{z\in K}|p(z)|.
\end{aligned}
\end{equation}

We are discussing Tur\'an-type inequalities on \eqref{Mdef} for
general convex sets, so some geometric parameters of the convex domain $K$ are involved naturally.
We write $d_K:=\diam K$ for the {\em diameter} of $K$, and $w_K:={\width}~ K$ for the {\em minimal width} of $K$. That is,
\begin{equation*}\label{diameterdef}
d_K:= \sup_{z', z''\in K} |z'-z''|, \qquad
w_K:= \inf_{\gamma\in [-\pi,\pi]} \left( \sup_{z\in K} \Re
(ze^{i\gamma}) - \inf_{z\in K} \Re (ze^{i\gamma}) \right).
\end{equation*}
Note that a (closed) convex \emph{domain} is a (closed),
bounded, convex set $K\subset\CC$ \emph{with nonempty
interior}, hence $0<w_K\le d_K<\infty$.
We also use the notation $\Delta_K$ for the \emph{transfinite diameter} of $K$ (see, e.g., \cite[5.5]{Rans}).

A detailed account of the results concerning Tur\'an-type inequalities for general convex sets is given in \cite{PR}, so here let us confine ourselves only to a less exhaustive history of the 
topic. In 1939, Tur\'an \cite{Tur} proved the following.
\begin{othm}{\bf(Tur\'an).}\label{oth:Turandisk}
If $p\in \PP_n(\DD)$, then we
have
\begin{equation}\label{Turandisk}
\Norm{p'}_\DD\ge \frac n2 \Norm{p}_\DD~.
\end{equation}
If $p\in\PP_n(\II)$, then we have
\begin{equation}\label{Turanint}
\Norm{p'}_\II\ge \frac {\sqrt{n}}{6} \Norm{p}_\II~.
\end{equation}
\end{othm}

Inequality \eqref{Turandisk} of Theorem \ref{oth:Turandisk} is
best possible. Regarding \eqref{Turanint}, Tur\'an pointed out
by example of $(1-x^2)^{n}$ that the $\sqrt{n}$ order cannot be
improved upon, even if the constant is not sharp. The precise value of the constants and
the extremal polynomials were computed for all fixed $n$ by
Er\H{o}d in \cite{Er}.

The key to get \eqref{Turandisk} is the following straightforward
observation.
\begin{olemma}{\bf(Tur\'an).}\label{Tlemma} Assume that $z\in\partial K$ and
that there exists a disc $D_R=\{\zeta\in \CC~:~ |\ze-z_0|\le R\}$ of radius $R$ so that $z\in\partial D_R$ and $K\subset D_R$. Then for all $n\in \NN$ and $p\in\PK$ we have
\begin{equation}\label{Rdisc}
|p'(z)| \ge \frac n{2R} |p(z)|.
\end{equation}
\end{olemma}

For the easy and direct proof see any of the references \cite{Tur, LP, Rev3, SofiaCAA, PR}. Levenberg and Poletsky \cite{LP} found it worthwhile to
formally define the crucial property of convex sets, used here.

\begin{definition}[{\bf Levenberg-Poletsky}]\label{def:Rcircular}
A set $K\subset \CC$ is called \emph{$R$-circular}, if for any  $z\in\partial K$ there exists a disk $D_R$ of radius $R$, such that
$z\in\partial D_R$ and $K\subset D_R$ .
\end{definition}

Thus in particular for any $R$-circular $K$ and $p\in \PP_n(K)$ at the
boundary point $z\in\partial K$ with $\|p\|_K=|p(z)|$ we can
draw the disk $D_R$ and get $\|p'\|_K \ge \dfrac{1}{2R} n \|p\|_K$, equivalently to \eqref{Turandisk}.

Er\H od continued the work of Tur\'an already the same year, investigating the inverse Markov factors of domains with some favorable geometric properties. The most general domains with $M_{n,\infty}(K)\gg n$, found by Er\H od, were described on p. 77 of \cite{Er}.

\begin{othm}{\bf(Er\H od).}\label{oth:transfquarter} Let $K$ be any convex domain bounded by finitely many Jordan arcs,
joining at vertices with angles $<\pi$, with all the arcs being $C^2$-smooth and being either straight lines of length $<\Delta_K$ or having positive curvature bounded away from $0$ by a fixed positive constant $\kappa>0$.

Then there is a constant $c(K)$, such that $M_{n,\infty}(K)\geq c(K) \,n$ for all $n\in\NN$.
\end{othm}

As is discussed in \cite{PR}, this result covers the case of regular $k$-gons $G_k$ for $k\ge 7$, but not the square $Q=G_4$. As for that matter, Erd\'elyi proved that $G_4$ too has order $n$ oscillation, however, this advance appeared only much later in \cite{E}.

A lower estimate of the inverse Markov factor for all compact convex sets (and of the same $\sqrt{n}$ order as was known for the interval) was obtained in full generality only in 2002 by Levenberg and Poletsky \cite[Theorem 3.2]{LP}.

Since $\sqrt{n}$ was already known to be the right order of growth for the inverse Markov factor of the interval $\II$, it remained to clarify the right order of oscillation for compact convex \emph{domains} with nonempty interior. This was solved a decade ago in \cite{Rev2};
for the fact that it is indeed the precise order 
see \cite{Rev3, PR, SofiaCAA}.
\begin{othm}{\bf(Hal\'{a}sz-R\'ev\'esz).}\label{th:convexdomain}
Let $K\subset \CC$ be any compact convex domain. Then for all  $p\in
\PK$ we have
\begin{equation*}\label{genrootineq}
\Norm{p'}_K\ge 0.0003 \frac{w_K}{d_K^2} n  \Norm{p}_K~.
\end{equation*}
\end{othm}

\bigskip
There are many papers dealing with the $L^q$-versions of Tur\'an's inequality.
The estimation of the $L^q$ norm, or of any 
weighted $L^q$ norms,
goes the same way if we have a pointwise estimation for all, (or for linearly almost all), boundary points.
Already Tur\'an himself mentioned in \cite{Tur} that by~\eqref{Rdisc} for any $q>0$ we have
$
M_{n,q}(\DD)\ge n/2.
$
Levenberg and Poletsky extended this observation to $R$-circular domains in \cite{LP}.

\begin{othm}{\bf (Levenberg-Poletsky).}\label{th:Levenberg-Poletsky} Assume that the convex, compact domain $K$ is $R$-circular with a certain radius $0<R<\infty$.
Then at any boundary point $z \in \partial K$ we have $|p'(z)| \ge \dfrac{n}{2R}  |p(z)|$.
Consequently, for any weighted $L^q$ norm $\|\cdot\|$, we have $\| p'\| \geq \dfrac{n}{2R} \|p\| ~( \forall p\in \PK)$. In particular, $M_{n,q}(K) \geq \dfrac{n}{2R}$.
\end{othm}

These estimates are not necessarily optimal, though. For more details about special results on $\DD$ and $\II$ see the detailed account of \cite{PR} and the references therein.

\bigskip

In case we discuss maximum norms, one can assume that $|p(z)|$ is maximal,
and it suffices to obtain a lower estimation of $|p'(z)|$ only at such a special point -- for general norms, however, this is not sufficient.
The above results work only for we have a pointwise inequality of the same strength \emph{everywhere}, or almost everywhere.

The situation becomes considerably more difficult, when such a statement cannot be proved. E.g. if the domain in question is not strictly convex (there is a line segment on the boundary), then the zeroes of the polynomial can be arranged so that even some zeroes of the derivative lie on the boundary, and at such points $p'(z)$ -- even $p'(z)/p(z)$ -- can vanish. As a result, at such points no fixed lower estimation can be guaranteed, and lacking a uniformly valid pointwise comparision of $p'$ and $p$, a direct conclusion cannot be drawn either.

This explains why the cases of the interval $\II$ and non strictly convex domains are much more complicated for the integral mean norms. Nevertheless, in a series of papers \cite{Zhou84, Zhou86, Zhou92, Zhou93, Zhou95}, Zhou proved that for the interval $\II$
$
M_{n,q}(\II)\ge c_{q}\sqrt{n}.
$
The best possible constants in some cases were found by Babenko and  Pichugov ~ \cite{BabenkoU86}, Bojanov ~\cite{Bojanov93} and Varma~\cite{Varma88_83}.

The classical inequalities of Bernstein and Markov are
generalized
to various differential operators, too, see  \cite{de-Bruijn, Rahman1969, Arestov, Rather,  AzizRather2007, Dewan, Jain1997, Jain2000}.
In this context, also Tur\'an type converses have been already investigated,  see e.g. \cite{Akopyan00, Dewan, Jain1997, Jain2000}.

\bigskip

Recently, we obtained some order $n$ oscillation results for certain convex domains without any $R$-circularity condition or strict convexity. To formulate this, let us
first introduce another geometrical notion, namely,
the \emph{depth} of a convex domain $K$ as
\begin{equation*}\label{eq:bodydepth}
h_K:=\sup \{ h\ge 0 ~:~ \forall \zeta\in\partial K ~\exists ~ {\rm a}~{\rm normal} ~ {\rm line} ~ \ell ~ {\rm at} ~ \zeta ~ {\rm to} ~ K ~ {\rm with} ~~ |\ell\cap K| \ge h \}.
\end{equation*}

We say that the convex domain $K$ has {\em fixed depth} or {\em positive depth}, if $h_K>0$.
Although this is quite a general class, which e.g. contains all smooth compact convex domains, observe that the regular triangle has $h_K=0$, as well as any polygon having some acute angle. For more about this class see \cite{PR}, where the following was proved.
\begin{othm}{\bf.}\label{th:posdepth} Assume that $K\subset \CC$ is a convex domain with positive depth $h_K>0$. Then for any $1\le q <\infty$, any $n\in \NN$ and any $p\in\PK$ it holds
\begin{equation*}\label{eq:ordernLq}
\|p'\|_{q,K} \ge c_K n \|p\|_{q,K} \qquad \left( c_K:=\frac{h_K^4}{3000 d_K^5} \right).
\end{equation*}
\end{othm}
From the other direction, we also proved that one cannot expect more than order $n$ growth of $M_{n,q}(K)$. In fact, in this direction our result was more general, but here we recall only a combination of Theorem 5 and Remark 6 of \cite{PR}.
\begin{othm}{\bf.}\label{th:orderupper} Let $K\subset \CC$ be any compact, convex domain.
Then for any $q \ge 1$ and any $n\in \NN$ there exists a polynomial $p\in \PK$  satisfying $\|p'\|_{L^q(\partial K)} < \dfrac{15}{d_K} n \|p\|_{L^q(\partial K)} $.
\end{othm}
Based on these findings, 
we concluded in \cite{PR} with the following conjecture.
\begin{conjecture} For all compact convex domains $K\subset \CC$ there exists $c_K>0$ such that for any $p\in\PK$ we have $\|p'\|_{L^q(\DK)} \ge c_K n \|p\|_{L^q(\DK)}$. That is, for any compact convex domain $K$ the growth order of $M_{n,q}(K)$ is precisely $n$.
\end{conjecture}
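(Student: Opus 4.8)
Since Theorem~\ref{th:posdepth} already settles the conjecture for every convex domain of \emph{positive} depth, the plan is to reduce the general statement to a local study at the places where $h_K$ degenerates. Among convex domains the principal source of vanishing depth is an \emph{acute vertex} $v$ (interior angle $<\pi/2$), where the perpendicular normal chords on the two incident edges shrink to $0$ as one approaches $v$. So I would fix a scale $\rho>0$ small relative to $d_K$ and split $\DK$ into the \emph{good part} $G$, consisting of boundary points at distance $\ge\rho$ from every acute vertex, and finitely many \emph{corner arcs} $\Gamma_v$ inside the $\rho$-neighborhoods of the acute vertices. On $G$ every boundary point admits an inner normal chord of length bounded below by some $h(\rho)>0$, and since the argument behind Theorem~\ref{th:posdepth} is genuinely pointwise-plus-transversal, I expect it to localize and yield
\begin{equation*}
\int_{G} |p'|^q\,|dz| \;\ge\; c(\rho)\, n^q \int_{G} |p|^q\,|dz| \qquad (p\in\PK),
\end{equation*}
with $c(\rho)$ depending only on $K$ and $\rho$. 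The whole difficulty is then concentrated on the finitely many corner arcs.

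\textbf{The corner arcs and a dichotomy.} On each $\Gamma_v$ no pointwise inequality $|p'|\ge cn|p|$ can hold: the incident edges are flat, so $K$ is not strictly convex there, and the zeros of $p$ may be placed so that $p'$ vanishes \emph{on} $\Gamma_v$. This is exactly the mechanism, recalled in the Introduction, that already makes the interval $\II$ hard and bars the direct use of Lemma~\ref{Tlemma} or Theorem~\ref{th:Levenberg-Poletsky}: along a flat edge the only ``containing disk'' is the edge's supporting half-plane, so $R=\infty$ and the disk lemma is vacuous. My plan here is a dichotomy according to the $L^q$-mass carried near the corners. If $\int_{\Gamma_v}|p|^q \le \tfrac12 \int_{\DK}|p|^q$ for every acute vertex, then $G$ carries at least half the mass and the displayed local estimate already gives $\|p'\|_{L^q(\DK)}\ge c_K\, n\,\|p\|_{L^q(\DK)}$. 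It therefore remains to treat the case in which the mass concentrates in a single corner neighborhood.

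\textbf{The crux.} Suppose $\int_{\Gamma_v}|p|^q \ge \tfrac12 \int_{\DK}|p|^q$ for some acute vertex $v$. At $v$ the outer angle is $\pi-\alpha$, so a containing disk of radius $R\le d_K$ is tangent to $\DK$ at $v$ and Lemma~\ref{Tlemma} gives the clean bound $|p'(v)|\ge \tfrac{n}{2d_K}|p(v)|$ — but only at the one point $v$, since immediately along either edge the containing disk opens up to a half-plane and the factor is lost. Propagating this single-point estimate into a genuine lower bound for $\int_{\Gamma_v}|p'|^q$ is, I expect, the real obstacle. The idea is to work inside the wedge at $v$: the inner normal chords there are of order $t\tan\alpha$ at distance $t$ from $v$, so the transversal integration of Theorem~\ref{th:posdepth} degrades and one must instead use \emph{both} edges at once, estimating the logarithmic derivative $p'/p=\sum_j (z-z_j)^{-1}$ in the two independent edge-normal directions and combining them over the two-dimensional sector. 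I would then use a Gabriel-type comparison of $\int_{\Gamma_v}|p|^q$ with the $L^q$-integral of $p$ over a short crosscut separating $v$ from the rest of $K$, on which the geometry is that of a fixed sector admitting a scale-invariant estimate in $t$. Making these sector estimates \emph{uniform in the aperture $\alpha$} as $\alpha\to0$, and reconciling the concentration dichotomy with the Gabriel transfer, is where the work concentrates; it is presumably also why one restricts, as the title indicates, to a \emph{generalized Er\H od class} — edges that are straight segments or arcs of curvature bounded below, meeting at angles $<\pi$ — so that Blaschke's rolling-ball theorem keeps the relevant containing-disk radii, and hence all constants, under control, with the fully general convex domain remaining the residual open case.
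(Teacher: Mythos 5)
You should first be clear about the status of the statement you set out to prove: in the paper it is a \emph{Conjecture}, not a theorem. The paper does not prove it --- what it proves (Theorem~\ref{th:E}) is the special case of $E(k,d,\Delta,\kappa,\xi,\delta)$-domains --- and your proposal, by its own closing admission, does not prove it either: the corner-arc estimate you call ``the crux'' is never carried out, and you concede the general convex domain remains open. So what can be assessed is your reduction, and that reduction contains a genuine error. Vanishing depth is \emph{not} caused only by acute vertices. Take $K$ to be the half-disk $\{z\in\CC : |z|\le 1,\ \Im z\ge 0\}$: its two corners have interior angle exactly $\pi/2$, so your set of acute vertices is empty and your good part $G$ is all of $\DK$; yet at a point $t\in(-1,1)$ of the straight edge the unique inner normal chord has length $\sqrt{1-t^2}\to 0$ as $t\to\pm1$, so $h_K=0$ and no uniform bound $h(\rho)>0$ on $G$ can exist. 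Depth degenerates through the interaction of an edge with the opposite part of the boundary (the normal chords), not through vertex angles alone, so the very first step --- localizing Theorem~\ref{th:posdepth} to $G$ --- fails before one even reaches the question of whether that theorem's proof localizes (which you assert, ``I expect it to localize'', but do not show). A smaller, fixable slip: with $m$ acute vertices (there can be up to three), demanding only that each corner arc carry at most half the mass does not leave half the mass on $G$; you would need thresholds of order $1/(2m)$.

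For comparison, the paper's actual proof of its E-domain theorem decomposes $\DK$ not by distance to vertices but into the \emph{curved} and \emph{straight} pieces supplied by the definition of the class. On curved pieces it obtains the pointwise Tur\'an bound from partial $R$-circularity (Lemma~\ref{l:partialRcircular}), proved by replacing each straight edge by a circular arc and applying Strantzen's rolling-ball theorem (Lemma~\ref{l:roughcurvature}) to the enlarged convex domain. On each straight piece it proves an alternative (Lemma~\ref{l:straightalternative}): either the pointwise bound $|p'|\ge c\,n\,|p|/d$ holds along the entire segment, or $|p|\le e^{-cn}\|p\|_K$ there; this second horn comes from Tur\'an's sum argument combined with the transfinite-diameter bound $\|p\|_K\ge\Delta_K^n$ (Lemma~\ref{l:FeketeSzego}), and it is precisely here that the E-domain hypotheses enter --- straight edges of length at most $\Delta_K-\delta$ and outer-angle jumps at least $\xi$ at their endpoints. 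Note that the half-disk cannot be made to satisfy these: its edge has length $2=d_K>\Delta_K$ (since $\Delta_K\le d_K/2$), and subdividing the edge creates joints of two straight pieces with zero outer angle, violating condition (4) of Definition~\ref{def:Edomain}. The exponentially small pieces are then absorbed via the Nikolskii-type Lemma~\ref{l:Nikolskii}, and small $n$ is handled by Gabriel's inequality (Lemma~\ref{l:Gabriel}). Your mass-concentration dichotomy is a rough analogue of this alternative, and your closing intuition --- that restricting to an Er\H od-type class is what keeps the constants under control --- is correct; but the controlling mechanism is the edge-length-versus-transfinite-diameter and angle-jump interplay, not rolling-ball radii at the vertices, and your acute-vertex reduction would have to be replaced by something like the paper's curved/straight decomposition even to recover the special case the paper actually proves.
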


The aim of the present work is to prove the validity of the above Conjecture for another class of compact convex domains, containing the class of Er\H od in Theorem \ref{oth:transfquarter}. One point is that in this class there are domains having even some acute angles at certain boundary points (and thus having zero depth). So the result essentially adds to the families of known classes having the property that $M_{n,q}(K)\asymp n$.

\section{Formulation of the result for generalized Er\H od type domains}\label{sec:Etype}

Before formulating our result, we need to introduce some geometrical notations.

We start with a \emph{convex, compact domain} $K\subset \CC$. Then its interior $\intt K \ne \emptyset$ and $K=\overline{\intt K}$, while its boundary $\Gamma:=\partial K$ is a convex Jordan curve. More precisely, $\Gamma = {\mathcal R}(\gamma)$ is the \emph{range} of a continuous,  convex, closed Jordan curve $\gamma$ on the complex plane $\CC$.
As the curve $\gamma$ is  convex, it  has finite arc length $L$ and we will restrict ourselves to \emph{parametrization with respect to arc length}.

If the parameter interval of the Jordan curve $\gamma$ is $[0,L]$, then this means, that $\gamma: [0,L] \to \CC$ is continuous, convex, and one-to-one on $[0,L)$, while $\gamma(L)=\gamma(0)$. While this compact interval parametrization is the most used setup for curves, we need an essentially equivalent interpretation with this, too: the periodically extended interpretation  $\gamma: \RR \to \partial K$  with $\gamma(t):=\gamma(t-[t/L]L)$ defined periodically all over $\RR$.

The parametrization $\gamma: \RR \to \partial K$ defines a unique ordering of points, which we assume to be positive in the counterclockwise direction, as usual.
When considered locally, i.e. with parameters not extending over a set longer than the period, this can be interpreted as ordering of the image (boundary) points themselves: we always implicitly assume, that a proper cut of the torus $\RR/L\ZZ$ is applied at a point to where the consideration is not extended, and then for the part of boundary we consider, the parametrization is one-to-one carrying over the ordering of the cut torus to $\partial K$.

Arc length parametrization has an immediate consequence also regarding the derivative, which must then have $|\dot{\gamma}|=1$, whenever it exists, i.e. (linearly) a.e. on $\RR$. Since $\dot{\gamma} :\RR \to \partial \DD$, we can as well describe the value by its angle or argument: the derivative angle function will be denoted by $\alpha:=\arg \dot{\gamma} : \RR \to \RR$.
Since, however, the argument cannot be defined on the unit circle without a jump,  we decide to fix one value and then define the extension continuously: this way $\alpha$ will not be periodic, but we will have rotational angles depending on the number of (positive or negative) revolutions, if started from the given point.
With this interpretation, $\alpha$ is an a.e. defined nondecreasing real function with $\alpha(t)-\frac{2\pi}{L} t$ periodic (by $L$) and bounded.
Angular values attained by $\alpha(t)$ are then ordered the same way as boundary points and parameters.
In particular, for a subset not extending to a full revolution,
the angular values are uniquely attached to the boundary points and parameter values and they can be similarly 
ordered
considering a proper cut.

Let $\alpha_{-}$ and $\alpha_{+}$ be 
the left- resp. right-continuous extensions of $\alpha$.
The geometrical meaning is that if for a parameter value $\tau$ the corresponding boundary point is $\gamma(\tau)=\ze$, then $[\alpha_{-}(\tau),\alpha_{+}(\tau)]$ is precisely the interval of values $\beta \in \TT$ such that the straight lines $\{\zeta+e^{i\beta}s~:~ s\in \RR\}$ are supporting lines to $K$ at $\zeta \in \partial K$.
We will interpret $\alpha$ as a multi-valued function, assuming all the values in $[\alpha_{-}(\tau),\alpha_{+}(\tau)]$ at the point $\tau$.

The curve $\gamma$ is differentiable at $\zeta=\gamma(\theta)$ if and only if $\alpha_{-}(\theta)=\alpha_{+}(\theta)$; in this case the unique tangent of $\gamma$ at $\zeta$ is $\zeta+e^{i\alpha}\RR$ with $\alpha=\alpha_{-}(\theta)=\alpha_{+}(\theta)$. Also note that by convexity the curvature $\ddot{\gamma}\ge 0$ exists and is nonnegative linearly a.e.

For obvious geometric reasons we call the jump function  $\Omega:=\alpha_{+}-\alpha_{-}$ the {\em supplementary angle} function. This is zero except for a countable set, and has positive values such that the total sum of the (possibly infinite number of) jumps on $[0,L]$ does not exceed the total variation of $\alpha$ on $[0,L]$, i.e. $2\pi$.

\begin{definition}\label{def:Edomain} We say that a compact convex domain $K\subset \CC$ is an $E$-\emph{domain} --- more precisely,
it is an $E(k,d,\Delta,\kappa,\xi,\delta)$-domain with the positive parameters $k\in \NN$, $d,\Delta,\kappa, \xi, \de >0$
satisfying $0<\Delta\le d/2,$ $0<\delta\le\Delta/2,$ $0<\xi\le \pi/2$ --- if the following properties hold.
\begin{enumerate}
\item $d=d_K$ and $\Delta=\Delta_K$ are the diameter and transfinite diameter of $K$, resp.
\item The boundary curve $\gamma:[0,L] \to \Gamma=\DK$ can be decomposed to a finite number $k$ of adjoining pieces $\Gamma=\cup_{j=1}^k \Gamma_j$, each $\Gamma_j$ being (the range of) a simple, convex Jordan arc $\gamma_j: I_j:=[v_j,v_{j+1}]\to \Gamma_j \subset \DK$ (with $j=1,\dots,k$ and $v_{k+1}:=v_1$);
\item Each Jordan arc $\gamma_j$ ($j=1,\dots,k$) belongs to one of the categories below. \begin{enumerate}[(i)]
    \item It either satisfies $|\ddot{\gamma}_j|\ge
    \kappa$ a.e. on $I_j$ (when we call it a \emph{curved} piece of the boundary);
    \item or it is a straight line segment of length $L_j:=|\Gamma_j|= v_{j+1}-v_j \le \Delta - \de$ (in which case it is called a \emph{straight} piece of the boundary).
    \end{enumerate}
\item At each $V_j=\gamma(v_j)$ ($j=1,\dots,k$), the boundary curve has an outer angle
 $\Omega(V_j):=\alpha_{+}(v_j)-\alpha_{-}(v_j) \ge \lambda(j) \xi$, where $\lambda(j)$ is the number of \emph{straight}
boundary pieces among $\Gamma_{j-1}$ and $\Gamma_{j}$, joining at $V_j$.
\end{enumerate}
\end{definition}
That is, we assume that there is a decomposition of the boundary, with each piece classified as either a ``straight'' or a ``curved" arc,
 and the parameters\footnote{By assumption, we took $\xi:=\min(\xi',\pi/2)$ and $\de:=\min(\de',\Delta/2)$.}
$$
\kappa:=\min\limits_{j \colon \Gamma_j  \atop \textrm{ is curved}}\essinf |\ddot{\gamma_j}|, \quad
   \xi':=\min\limits_{j \colon \Gamma_{j-1} \textrm{ or } \Gamma_{j} \atop \textrm{ is straight}} \Omega(V_j)/\lambda(j),\quad
\delta':=\min\limits_{j \colon \Gamma_j \atop \textrm{ is straight}} (\Delta-L_j)
$$
are positive. Clearly, this class contains that in Theorem \ref{oth:transfquarter}, some additional generality lying in the facts that we drop any smoothness condition on the curved arcs $\Gamma_j$ and require the separation of the curvature from zero only almost everywhere. As for that matter, this also allows \emph{joining into one piece} any consecutive curved arcs (which was not possible under the original $C^2$ condition of Er\H od). Also, strictly speaking $\DD$ (or other $C^2$ domains with curvature otherwise exceeding a constant $\kappa>0$) do not belong to the original class of Er\H od, for the parametrization a somewhat artificial vertex point $V_1$ have to be taken, where, however, no jump of the angle exhibits itself. This explains why it is advantageous to involve here also the quantity $\lambda(j)$: at joining points of only curved pieces no jump need to be assumed.

Note that assuming $|\ddot{\gamma}_j|\ge \kappa$ only a.e. is the relatively recent generality, achieved in the topic of Blaschke Rolling Ball Theorems. The classical result of Blaschke \cite{Bla} gives only that a convex domain $K$ with a $C^2$-smooth boundary curve $\gamma$, having curvature $|\ddot{\gamma}| \ge \kappa_0>0$ \emph{everywhere} along the boundary, 
is $1/\kappa_0$-circular. This has been generalized recently even to a.e. conditions, as we will discuss somewhat below. Although these theorems cannot be directly used here, due to the presence of some straight line pieces on the boundary of our Er\H od type domains, in the proofs we will still invoke them and therefore achieve this greater generality, too.

So we will dedicate the rest of the paper to the proof of the following.

\begin{theorem}\label{th:E} Let $K\subset \CC$ be an $E(k,d,\Delta,\kappa,\xi,\delta)$-domain as defined above. Then there exists a constant $c=c_K$ (depending explicitly on the parameters $k,d,\Delta,\kappa,\xi,\delta$) such that for any $q\ge 1$, any $n\in \NN$ and any $p\in\PK$ we have
$$
\|p'\|_q \ge c_K n \|p\|_q.
$$
\end{theorem}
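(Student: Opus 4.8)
The plan is to prove the equivalent inequality $\int_{\Gamma}|p'|^q\,|dz|\ge c_K^{\,q}\,n^q\int_{\Gamma}|p|^q\,|dz|$ and to treat the curved and the straight boundary pieces by entirely different means. First I would dispose of the curved arcs. Although $K$ itself need not be $R$-circular (the straight pieces obstruct any globally rolling enclosing disk, cf.\ Definition~\ref{def:Rcircular}), along a curved arc $\Gamma_j$ with $|\ddot\gamma_j|\ge\kappa$ a.e.\ the boundary is strictly convex, so at (linearly) a.e.\ point $\zeta=\gamma_j(t)$ one can still exhibit a finite enclosing disk tangent at $\zeta$ and containing $K$. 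Using the a.e.\ form of the Blaschke Rolling Ball Theorem together with the global diameter bound, I would estimate the radius of such a disk by some $R_0$ controlled by $\kappa$ and $d$, and invoke Tur\'an's Lemma~\ref{Tlemma} to obtain the pointwise inequality $|p'(\zeta)|\ge \frac{n}{2R_0}|p(\zeta)|$ a.e.\ on every curved arc. Raising to the $q$-th power and integrating yields
\[
\int_{\mathrm{curved}}|p'|^q\,|dz|\ \ge\ \Big(\frac{n}{2R_0}\Big)^q\int_{\mathrm{curved}}|p|^q\,|dz|,
\]
exactly as in Theorem~\ref{th:Levenberg-Poletsky}, but only over the curved part of $\Gamma$.

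The real work is on the straight segments, and the goal there would be reduced to a single comparison: it suffices to show that the straight segments $\Gamma_j=[V_j,V_{j+1}]$ carry no more $|p|^q$-mass than a fixed multiple of the mass on the curved arcs, i.e.
\[
\sum_{j:\ \Gamma_j\ \mathrm{straight}}\int_{\Gamma_j}|p|^q\,|dz|\ \le\ C\int_{\mathrm{curved}}|p|^q\,|dz|.
\]
Indeed, combining this with the curved estimate and $\int_{\mathrm{curved}}|p'|^q\le\int_\Gamma|p'|^q$ gives $\int_\Gamma|p'|^q\ge (1+C)^{-1}(n/2R_0)^q\int_\Gamma|p|^q$, which is the theorem. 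To prove the mass comparison I would use three features simultaneously. (i) Convexity forces all zeros of $p$ to lie in the closed half-plane bounded by the line $\ell_j$ of $\Gamma_j$, so that on $\Gamma_j$ the logarithmic derivative has a sign-definite imaginary part, $\Im(p'/p)\ge0$; this already prevents $|p'/p|$ from being uniformly small unless the zeros crowd onto $\ell_j$ near $\Gamma_j$. (ii) The outer-angle hypothesis $\Omega(V_j)\ge\xi$ guarantees that the boundary veers off $\ell_j$ by at least $\xi$ at both endpoints, so that near $V_j,V_{j+1}$ a finite enclosing disk again exists and the adjacent arcs carry a controlled share of the mass. (iii) The length restriction $L_j\le\Delta-\delta<\Delta_K$, read through the transfinite diameter (Chebyshev constant) $\mathrm{cap}(\Gamma_j)=L_j/4<\Delta_K/4=\mathrm{cap}(K)/4$, rules out the extremal concentration: a polynomial with all zeros in $K$ cannot place its dominant mass on a chord of strictly smaller capacity than $K$ itself.

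The technical device binding these together is a Gabriel-type inequality: viewing the straight segment $\Gamma_j$ as the chord subtended by the complementary convex sub-arc of $\Gamma$, one bounds $\int_{\Gamma_j}|p|^q$ by a dimensionless constant times the integral over that sub-arc, and then uses (ii)--(iii) to route this sub-arc mass onto genuinely curved pieces while losing only a factor depending on $k,\xi,\delta,\Delta$. I expect the main obstacle to be precisely this straight-segment step. Two things make it delicate: there is no pointwise lower bound for $|p'|/|p|$ available on the interior of a straight segment at all --- the zeros can be arranged so that $p'$ vanishes there --- and the naive per-segment inequality $\int_{\Gamma_j}|p'|^q\ge cn^q\int_{\Gamma_j}|p|^q$ is simply false, as the configuration with all zeros split between $V_j$ and $V_{j+1}$ reproduces Tur\'an's $(1-x^2)^n$ example and forces only order $\sqrt n$. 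Hence the $\Delta_K$-gap and the angle condition must be used quantitatively, not merely qualitatively, to show that any such concentrated configuration necessarily deposits an exponentially larger mass on the curved arcs; the bookkeeping of the resulting constants across the $k$ pieces is where the explicit dependence of $c_K$ on $(k,d,\Delta,\kappa,\xi,\delta)$ is produced.
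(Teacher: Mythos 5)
Your plan fails at its central reduction, the mass--comparison on the straight pieces. The inequality $\sum_{j:\,\Gamma_j \,\mathrm{straight}}\int_{\Gamma_j}|p|^q\le C\int_{\mathrm{curved}}|p|^q$ with $C$ independent of $n$ and $p$ is false, and so is the principle you base it on (``a polynomial with all zeros in $K$ cannot place its dominant mass on a chord of strictly smaller capacity than $K$''). Take any $E$-domain with a straight piece $\Gamma_1$, put $A:=\overline{\DK\setminus\Gamma_1}$ (a compact connected arc containing all curved pieces), and let $p_n$ be the $n$-th Fekete polynomial of $A$; all its zeros lie on $A\subset K$, so $p_n\in\PK$. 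By classical facts, $\|p_n\|_{A}^{1/n}\to\mathrm{cap}(A)$, while $\tfrac1n\log|p_n|\to U^{\nu_A}=\log\mathrm{cap}(A)+g_{\CC\setminus A}(\cdot,\infty)$ locally uniformly off $A$, where the Green function $g_{\CC\setminus A}(\cdot,\infty)$ is strictly positive on the open segment $\Gamma_1^{\circ}$. Hence on the middle third of $\Gamma_1$ we get $|p_n|\ge \bigl(\mathrm{cap}(A)\,e^{\,g_1-\varepsilon}\bigr)^n$ with some $g_1>0$, whereas $\int_{\mathrm{curved}}|p_n|^q\le |A|\,\|p_n\|_A^q\le |A|\bigl(\mathrm{cap}(A)\,e^{\varepsilon}\bigr)^{qn}$: the straight-to-curved mass ratio grows like $e^{cqn}$, so no fixed $C$ exists. (The theorem itself is not threatened here: the zeros, spread over $A$, are seen from every point of $\Gamma_1$ under angles bounded away from $0$ and $\pi$ in a fixed proportion, so Tur\'an's sum already gives $|p_n'|\ge c(n/d)|p_n|$ pointwise on $\Gamma_1$ --- the chord supplies its own derivative mass.) This is exactly the dichotomy your proposal lacks and which the paper proves in Lemma~\ref{l:straightalternative}: on each straight piece, \emph{either} the pointwise bound $|p'|>\eta\frac{\sin\theta}{d}\,n|p|$ holds everywhere on it, \emph{or} $|p|\le e^{-2\eta n}\|p\|_K$ on the whole piece; and the second alternative is then disposed of not by routing mass to the curved arcs, but by the Nikolskii-type Lemma~\ref{l:Nikolskii}, which shows such pieces carry at most $\tfrac12\|p\|_q^q$ once $n\ge n_0=100(\Delta/\delta)^2$, small $n$ being settled by Gabriel's Lemma~\ref{l:Gabriel}. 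Note also that the capacity gap $L_j\le\Delta-\delta$ enters there to bound $|p|$ \emph{on the chord} when most zeros lie in a thin wedge around it (via $\|p\|_K\ge\Delta^n$), not to control where the $L^q$ mass sits.

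There is a second, lesser gap on the curved arcs. The existence of an enclosing disk of radius controlled by $\kappa$ and $d$ at points of a curved arc does \emph{not} follow from the a.e.\ Blaschke--Strantzen theorem plus a diameter bound: that theorem requires the curvature bound a.e.\ along the \emph{entire} boundary, and your argument never invokes the outer-angle condition (4) of Definition~\ref{def:Edomain}. A stadium (two semicircular caps of radius $r$ joined tangentially to two straight sides of length $L$) satisfies everything you actually use --- each cap has $|\ddot\gamma|=1/r$ a.e.\ and the diameter is finite --- yet at the junction points no enclosing disk of \emph{any} radius exists (a disk tangent to the supporting line there cannot contain the far cap), and on a subarc of the caps of positive length near the junctions enclosing disks either fail to exist or require radius blowing up; so reasoning that ignores the angle condition cannot be repaired. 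For $E$-domains the partial $R$-circularity you want is true, but it is precisely Lemma~\ref{l:partialRcircular} of the paper, whose proof genuinely uses $\Omega(V_j)\ge\lambda(j)\xi$: each straight piece is replaced by a circular arc of radius $|V_{\ell+1}-V_\ell|/(2\sin\xi)$ fitting inside the angle jumps, the enlarged domain $K^{\star}\supset K$ is verified to be convex with curvature at least $\min(\kappa,\,2\sin\xi/\Delta)$ a.e., and only then is Strantzen's lemma applied.
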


\section{Technical preparations for the investigation of~$L^q(\partial K)$ norms}\label{s:Lemmas}

\begin{lemma}\label{l:Nikolskii} For any polynomial $p$ of degree at most $n$ and any $q>0$ we have that
\begin{equation}\label{eq:Nikolskii}
\|p\|_{L^q(\DK)} \ge \left( \frac{d}{2(q+1)}\right)^{1/q}
~\|p\|_{L^\infty(\DK)} ~ n^{-2/q} .
\end{equation}
\end{lemma}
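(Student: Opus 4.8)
The statement is a reverse Nikolskii inequality on $\DK$: it bounds the sup norm from above by the $L^q$ norm, and the loss $n^{-2/q}$ is exactly what one expects from the interval, where the Chebyshev polynomial concentrates near an endpoint on a scale $\sim 1/n^{2}$. Hence no better exponent is available for domains whose boundary may contain flat pieces. The plan is to combine two ingredients: a Markov-type upper bound for $\|p'\|_{\infty}$ of the form $\|p'\|_{\infty}\le Cn^{2}d^{-1}\|p\|_{\infty}$, and the elementary fact that, along the arc-length parametrized boundary, $|p|$ cannot decay faster than at rate $\|p'\|_{\infty}$. Integrating the resulting linear lower bound for $|p|^{q}$ over a short arc around the maximum point then yields the claim, and the explicit constant $d/(2(q+1))$ falls out of the exact integration.

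First I would establish the Markov ingredient. Since $p'$ is itself a polynomial, $|p'|$ attains its maximum over $K$ at some $z^{*}\in\DK$. For any $w\in K$ the segment $[z^{*},w]$ lies in $K$ by convexity, and the modulus of the $t$-derivative of $t\mapsto p(z^{*}+tu)$ at $t=0$ equals $|p'(z^{*})|$ for every unit direction $u$; hence I may pick the longest such chord. Taking $u_0,v_0\in K$ with $|u_0-v_0|=d$, the triangle inequality gives $\max(|z^{*}-u_0|,|z^{*}-v_0|)\ge d/2$, so there is a chord of length $\ell\ge d/2$ emanating from $z^{*}$. Applying the classical Markov inequality on a segment of length $\ell$ (in the form $\max|f'|\le \tfrac{2n^{2}}{\ell}\max|f|$, valid also for complex-coefficient polynomials $f$) to the restriction $f(t)=p(z^{*}+tu)$ and evaluating at $z^{*}$ gives
\begin{equation*}
\|p'\|_{\infty}=|p'(z^{*})|\le \frac{2n^{2}}{\ell}\,\|p\|_{\infty}\le \frac{4n^{2}}{d}\,\|p\|_{\infty}.
\end{equation*}
Alternatively one may simply invoke a known Markov inequality for convex bodies in diameter form.

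Next comes the decay-and-integrate step. Let $z_0=\gamma(s_0)\in\DK$ satisfy $|p(z_0)|=M:=\|p\|_{\infty}$. From the Lipschitz bound $\big||p(\gamma(s))|-|p(\gamma(s_0))|\big|\le \|p'\|_{\infty}|s-s_0|$ (using $|\dot\gamma|=1$ a.e.) we obtain the two-sided estimate $|p(\gamma(s))|\ge M-\|p'\|_{\infty}|s-s_0|$. Writing $\sigma:=M/\|p'\|_{\infty}$ and integrating over the arc $|s-s_0|\le \sigma$ on both sides of $z_0$,
\begin{equation*}
\|p\|_{L^q(\DK)}^{q}\ge 2\int_{0}^{\sigma}\big(M-\|p'\|_{\infty}t\big)^{q}\,dt=\frac{2M^{q+1}}{(q+1)\|p'\|_{\infty}}\ge \frac{2M^{q+1}}{(q+1)}\cdot\frac{d}{4n^{2}M}=\frac{d\,M^{q}}{2(q+1)n^{2}}.
\end{equation*}
Taking $q$-th roots gives exactly $\|p\|_{L^q(\DK)}\ge \big(d/(2(q+1))\big)^{1/q}\|p\|_{\infty}\,n^{-2/q}$.

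The one genuine technical point — and the step I expect to need the most care — is ensuring that the symmetric arc of half-length $\sigma$ around $z_0$ actually fits inside the closed boundary curve, i.e. that $2\sigma\le L$. Since the perimeter satisfies $L\ge 2d$, this is comfortable once $\sigma$ is controlled from above; a crude lower bound on $\|p'\|_{\infty}$ (for instance the order-$n$ Tur\'an estimate of Theorem \ref{th:convexdomain}, or simply the observation that $p\in\PK$ is non-constant with a zero in $K$) forces $\sigma\le L/2$. In the opposite degenerate regime $2\sigma>L$, the same linear lower bound keeps $|p|$ above a fixed fraction of $M$ on the \emph{entire} boundary, so that $\|p\|_{L^q(\DK)}$ is bounded below by a constant multiple of $L^{1/q}M$ and the inequality holds a fortiori. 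Apart from this bookkeeping, the proof reduces to the two displayed estimates above.
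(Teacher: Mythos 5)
Your proof is correct and is essentially the same argument as the one the paper cites for this lemma (\cite[Lemma 1]{PR}): a Markov bound $\|p'\|_{\infty}\le 4n^{2}d^{-1}\|p\|_{\infty}$ obtained from a chord of length at least $d/2$, linear decay of $|p|$ along the arc-length parametrization of $\partial K$, and exact integration of the resulting bound, which is precisely what produces the constant $\left(d/(2(q+1))\right)^{1/q}$ and the factor $n^{-2/q}$. Two remarks on your final paragraph: the parenthetical idea of forcing $\sigma\le L/2$ by a Tur\'an-type lower bound on $\|p'\|_{\infty}$ is not available, since the lemma concerns arbitrary polynomials of degree at most $n$ (constants included, where $\|p'\|_{\infty}=0$), not only $p\in\PK$ --- but your second, a fortiori treatment of the regime $2\sigma>L$ does close this gap correctly; alternatively, the whole issue disappears if you substitute the Markov bound into the decay estimate \emph{before} integrating, i.e. integrate $M^{q}\left(1-4n^{2}t/d\right)^{q}$ over $0\le t\le d/(4n^{2})$, since $d/(4n^{2})\le d/4<L/2$ holds automatically from $L\ge 2d$.
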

For a proof of this Nikolskii-type estimate, see \cite[Lemma 1]{PR}.

Next, let us define the subset $\HH:=\HH_K^q(p) \subset \DK$ the following way.
\begin{equation}\label{eq:Hsetdef}
\HH:=\HH_K^q(p):=\{\zeta\in \partial K~: ~ |p(\zeta)| > c n^{-2/q} \|p\|_\infty\}
\quad \left( c:=\left(\frac{1}{8\pi(q+1)}\right)^{-1/q} \right).
\end{equation}
Then in \cite[Section 3.1]{PR} it was deduced from the above Lemma \ref{l:Nikolskii} that we have
\begin{lemma}\label{l:Hlogp} Let $\HH \subset \DK$ be defined according to \eqref{eq:Hsetdef} and let $q>0$ be arbitrary. Then for all $n\in \NN$ and for all $p \in \PP_n$ we have
\begin{equation*}\label{eq:pqintegralonH}
\int_{\HH} |p|^q \geq \frac12 \|p\|^q_{L^q(\DK)}.
\end{equation*}
Furthermore, for any point $\ze \in \HH$, and for any $n\in \NN$ and any $p\in \PK$ we also have
\begin{equation*}\label{eq:pnormperponH}
\log \frac{\|p\|_\infty}{|p(\ze)|} \le \log (16\pi) + 2 \log n  \quad \left( \le 4 \log n ~{\rm if} ~ n\ge 8 \right).
\end{equation*}
\end{lemma}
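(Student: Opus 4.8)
The plan is to derive both assertions directly from the Nikolskii-type estimate of Lemma~\ref{l:Nikolskii} together with the defining inequality of $\HH$ in~\eqref{eq:Hsetdef}; no new machinery is needed, only careful bookkeeping of the threshold constant $c$.

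For the first (integral) assertion I would split the total mass as $\|p\|_{L^q(\DK)}^q = \int_\HH |p|^q + \int_{\DK\setminus\HH}|p|^q$ and show that the second term is at most half of the whole. On $\DK\setminus\HH$ the definition~\eqref{eq:Hsetdef} gives the pointwise bound $|p(\zeta)| \le c\,n^{-2/q}\|p\|_\infty$ with $c^q = 1/(8\pi(q+1))$, whence
\[
\int_{\DK\setminus\HH}|p|^q \le c^q n^{-2}\|p\|_\infty^q\,|\DK| \le \frac{n^{-2}\|p\|_\infty^q}{8\pi(q+1)}\cdot 2\pi d,
\]
using that $K$ lies in a disk of radius $d=d_K$, so that by monotonicity of perimeter under inclusion $|\DK|\le 2\pi d$. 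Inserting Lemma~\ref{l:Nikolskii} in the rearranged form $n^{-2}\|p\|_\infty^q \le \frac{2(q+1)}{d}\,\|p\|_{L^q(\DK)}^q$ collapses the constants to give exactly $\int_{\DK\setminus\HH}|p|^q \le \tfrac12\|p\|_{L^q(\DK)}^q$, and subtracting from the total yields $\int_\HH|p|^q \ge \tfrac12\|p\|_{L^q(\DK)}^q$. The point is that the threshold constant $c$ in~\eqref{eq:Hsetdef} is calibrated precisely so that this tail estimate balances against the Nikolskii bound.

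For the second (pointwise) assertion the computation is essentially immediate: if $\ze\in\HH$ then, again by~\eqref{eq:Hsetdef}, $\|p\|_\infty/|p(\ze)| < c^{-1}n^{2/q} = (8\pi(q+1))^{1/q}\, n^{2/q}$, so taking logarithms
\[
\log\frac{\|p\|_\infty}{|p(\ze)|} < \frac1q\log\bigl(8\pi(q+1)\bigr) + \frac2q\log n.
\]
Restricting to the range $q\ge 1$ relevant for Theorem~\ref{th:E}, I would bound $\frac2q\log n \le 2\log n$ and $\frac1q\log(8\pi(q+1)) \le \log(16\pi)$; the latter because $g(q):=\frac1q\log(8\pi(q+1))$ satisfies $g(1)=\log(16\pi)$ and $g'(q)=q^{-2}\bigl(\frac{q}{q+1}-\log(8\pi(q+1))\bigr)<0$ for $q\ge 1$, hence is decreasing. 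This gives $\log\frac{\|p\|_\infty}{|p(\ze)|} \le \log(16\pi) + 2\log n$, and the final simplification to $4\log n$ for $n\ge 8$ follows from $16\pi \le 64 \le n^2$.

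Both steps are routine, and I do not expect a genuine obstacle: the only places requiring a moment's care are the elementary convex-geometric estimate $|\DK|\le 2\pi d$ in the first part and the monotonicity of $g(q)$ in the second. The real content of the lemma is simply that the threshold $c$ in~\eqref{eq:Hsetdef} has been chosen so that both inequalities fall out cleanly from Lemma~\ref{l:Nikolskii}; note also that neither proof uses the location of the zeros of $p$, so the first statement holds for all $p\in\PP_n$ and only the quantitative form is specialized to $q\ge 1$.
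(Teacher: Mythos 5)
Your proposal is correct in substance and follows exactly the route the paper intends: the paper offers no proof of Lemma~\ref{l:Hlogp} beyond the citation of \cite[Section 3.1]{PR}, where the lemma is deduced from the Nikolskii-type Lemma~\ref{l:Nikolskii}, and that deduction is precisely your argument (pointwise bound on $\DK\setminus\HH$ from \eqref{eq:Hsetdef}, the perimeter bound $|\DK|\le 2\pi d$, then \eqref{eq:Nikolskii} rearranged so that the constants collapse to $\tfrac12$). One remark on the constant: you tacitly corrected a sign typo in \eqref{eq:Hsetdef} --- as printed, $c=\left(\frac{1}{8\pi(q+1)}\right)^{-1/q}=\left(8\pi(q+1)\right)^{1/q}>1$, with which $\HH$ could even be empty and the first assertion would fail; your reading $c^q=1/\left(8\pi(q+1)\right)$ is clearly the intended one, being exactly what makes the tail estimate balance against the Nikolskii bound. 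Your side worry about $|\DK|\le 2\pi d$ needs no care at all: by the Cauchy formula the perimeter of a convex domain of diameter $d$ is at most $\pi d$, and the paper itself invokes $L\le 2\pi d$ in Section~\ref{s:Edomainproof}.

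Your restriction of the second assertion to $q\ge 1$ is not a gap in your argument but a genuine defect of the lemma as stated: for $0<q<1$ the second display is false, so no proof could remove that restriction. Indeed, take $K=\DD$ and $p(z)=(z-1)^n\in\PP_n(\DD)$. For fixed $q<1$ and $n$ large we have $c^{-1}n^{2/q}=\left(8\pi(q+1)\right)^{1/q}n^{2/q}>16\pi n^2$, i.e.\ the threshold $cn^{-2/q}\|p\|_\infty$ lies strictly below $\|p\|_\infty/(16\pi n^2)$; since $|p|$ is continuous on $\partial\DD$ with range $[0,2^n]$, there is a point $\ze$ with $cn^{-2/q}\|p\|_\infty<|p(\ze)|<\|p\|_\infty/(16\pi n^2)$, which lies in $\HH$ yet violates $\log\left(\|p\|_\infty/|p(\ze)|\right)\le \log(16\pi)+2\log n$. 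So the quantifier ``$q>0$ arbitrary'' can legitimately refer only to the first assertion; the second holds exactly in the range $q\ge1$ in which the source \cite{PR} and Theorem~\ref{th:E} operate, and your monotonicity argument for $g(q)=\frac1q\log\left(8\pi(q+1)\right)$ is the right way to obtain the clean constant $\log(16\pi)+2\log n$ there.
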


Note that the proof of Theorem \ref{oth:transfquarter} by Er\H od in \cite{Er} was slightly incomplete\footnote{For a detailed analysis and the necessary slight addition to the argument, see \cite{Er} and \cite{Rev3}.} and went along somewhat different lines, basically trying to follow the geometrical features by a direct calculation, which otherwise could have been treated--as we will do here--by the Blaschke Rolling Ball Theorem (seemingly unknown to Er\H od). While we are utilizing the essential ideas of the method of Er\H od, here we make explicit use of these Rolling Ball Theorems, and, in fact, capitalize on the far-reaching generalizations known by now in geometry. More precisely, the key to treat the curved arcs of the boundary of $K$ will be the next lemma.
\begin{olemma}{\bf(Strantzen).}\label{l:roughcurvature} Let the compact convex domain $K$ have boundary curve $\Gamma=\partial K$ and let $\kappa>0$ be a fixed constant. Assume that the convex boundary curve $\Gamma$ (which is, by convexity, twice differentiable linearly almost everywhere) satisfies the curvature condition $|\ddot{\Gamma}|\geq \kappa$ almost everywhere. Then to each boundary point $\zeta\in\partial K$ there exists a disk $D_R$ of
radius $R=1/\kappa$, such that $\zeta\in\partial D_R$, and
$K\subset D_R$. That is, $K$ is $R=1/\kappa$-circular.
\end{olemma}
\begin{proof} This result is essentially the far-reaching, relatively
recent generalization of Blaschke's Rolling Ball Theorem by
Strantzen. A reference for it is Lemma 9.11 on p. 83 of \cite{BS}.
For more details on this, as well as for some new approaches to the proof of
this generalization of the classical Blaschke Rolling Ball
Theorem, see \cite{Rev4}.
\end{proof}


From here it is easy to see the following result, which is the $k=1$ boundary curve case of Theorem \ref{th:E}, when that one boundary piece is necessarily curved (since otherwise we encounter the degenerate case of an interval only).
\begin{proposition}\label{th:Lqcircular} Assume that the boundary curve $\gamma:[0,L]\to\Gamma:=\partial K$ of the convex domain $K$ satisfies at (linearly) almost all points the condition that it has a curvature, not smaller than a given positive constant $\kappa$, i.e. $|\ddot{\gamma}| \geq \kappa \, (>0)$ a.e. Then we have for any $n\in \NN$ and any $p\in\PK$ that $|p'(z)|\ge \dfrac{\kappa}{2}n|p(z)|$ ($z\in \partial K$) and for any weighted $L^q$ norm $\|\cdot\|$ on $\Gamma=\partial K$, we have $\| p'\| \geq \dfrac{\kappa}{2} n \|p\|$. In particular, $M_{n,q}(K) \geq \dfrac{\kappa}{2} n$.
\end{proposition}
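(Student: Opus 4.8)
The plan is to combine the two geometric tools already prepared in the excerpt --- Strantzen's generalized Rolling Ball Lemma (Lemma \ref{l:roughcurvature}) and Tur\'an's pointwise estimate (Lemma \ref{Tlemma}) --- and then transfer the resulting everywhere-valid pointwise inequality to the $L^q$ setting exactly as in the Levenberg--Poletsky Theorem \ref{th:Levenberg-Poletsky}. So the argument splits into a geometric step (producing a rolling disk at each boundary point) and an analytic step (reading off the derivative bound and integrating it).

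First I would invoke Lemma \ref{l:roughcurvature} with the given constant $\kappa>0$. The hypothesis $|\ddot{\gamma}|\ge\kappa$ holding linearly almost everywhere is precisely the curvature condition appearing there, so the conclusion is that $K$ is $R$-circular with $R=1/\kappa$. The crucial feature I would emphasize is that this conclusion is \emph{uniform}: at every boundary point $z\in\partial K$ --- not merely almost every one --- there is a disk $D_R$ of radius $R=1/\kappa$ with $z\in\partial D_R$ and $K\subset D_R$.

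Next I would fix an arbitrary $z\in\partial K$ and an arbitrary $p\in\PK$, and feed the disk $D_R$ just produced into Tur\'an's Lemma \ref{Tlemma}. Since $R=1/\kappa$, this yields the pointwise bound
\[
|p'(z)|\ \ge\ \frac{n}{2R}\,|p(z)|\ =\ \frac{\kappa}{2}\,n\,|p(z)|.
\]
As $z\in\partial K$ was arbitrary, this comparison holds at every boundary point, which is exactly the first assertion of the Proposition.

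Finally, because the pointwise inequality $|p'(z)|\ge \tfrac{\kappa}{2}\,n\,|p(z)|$ is valid everywhere on $\Gamma$, I would raise both sides to the power $q$ and integrate against any weight, precisely as in the proof of Theorem \ref{th:Levenberg-Poletsky}; for any weighted $L^q$ norm this gives $\|p'\|\ge \tfrac{\kappa}{2}\,n\,\|p\|$, and taking the infimum over $p\in\PK$ yields $M_{n,q}(K)\ge \tfrac{\kappa}{2}\,n$. There is essentially no analytic obstacle once the two cited lemmas are in hand: the entire difficulty is concentrated in Strantzen's Lemma, i.e. in upgrading the merely almost-everywhere curvature bound to the everywhere-valid rolling ball property. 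That geometric input is imported rather than reproved here, so the present proof is just the clean assembly of these ingredients.
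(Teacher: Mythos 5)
Your proposal is correct and follows essentially the same route as the paper: Strantzen's Lemma \ref{l:roughcurvature} upgrades the a.e.\ curvature bound to $R$-circularity with $R=1/\kappa$, after which the paper simply cites Theorem \ref{th:Levenberg-Poletsky}, whose proof is exactly the Tur\'an Lemma \ref{Tlemma} pointwise estimate plus integration that you spell out. The only difference is that you unpack that cited theorem into its two ingredients rather than invoking it as a black box, which is a matter of exposition, not substance.
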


\begin{proof} This was implicitly contained already in \cite{Rev3, SofiaCAA} and was explicitly formulated as Theorem M in \cite{PR}. The proof is clear: the geometric condition entails the $R$-circularity of the domain with $R:=1/\kappa$ in view of Lemma \ref{l:roughcurvature}, whence Theorem \ref{th:Levenberg-Poletsky} furnishes the result.
\end{proof}


The other key and innovative feature of the original work of Er\H od was invoking Chebyshev's Lemma, which we will use in the slightly more general form of an estimation using the transfinite diameter.
\begin{olemma}{\bf (Transfinite Diameter Lemma).}\label{l:FeketeSzego} Let $K\subset \CC$
be any compact set, $n\in \NN$ arbitrary, and $p\in\Pn$ be a monic polynomial, i.e. assume that $p(z)=\prod_{j=1}^n (z-z_j)$.
Then we have $\|p\|_{L^\infty(\partial K)} \ge \left(\Delta_K\right)^n$.
\end{olemma}
\begin{proof} In various forms essentially this was first proved by Fekete, Faber and Szeg\H o. For details and references see \cite[Lemma P]{PR} and its discussion there.
\end{proof}

We will also use a classical result of Gabriel \cite[Theorem 5.1]{Gabriel}, see also \cite{PR}. In fact, we will need the following consequence of Gabriel's Lemma.
\begin{lemma}\label{l:Gabriel} If $n\in \NN$ and if $p \in \PK$, then for any $q \ge 1$ it holds
$$
\|p'\|_q \ge  0.022 \dfrac1{d} \|p\|_q.
$$
\end{lemma}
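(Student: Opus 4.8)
The plan is to prove the equivalent upper bound $\|p\|_q \le (1/0.022)\, d\, \|p'\|_q$, and the one place where the hypothesis $p\in\PK$ enters is to fix a zero $z_1\in K$ of $p$ (which exists since $\deg p = n\ge 1$). For such a $z_1$ and any boundary point $z\in\DK$, convexity of $K$ guarantees that the whole segment $[z_1,z]$ lies in $K$, so $p(z)=\int_{[z_1,z]}p'(\zeta)\,d\zeta$. Parametrising this segment as $\zeta=z_1+s(z-z_1)$, $s\in[0,1]$, and using $|z-z_1|\le d_K=d$, I would first record the pointwise bound $|p(z)|\le d\int_0^1|p'(z_1+s(z-z_1))|\,ds$.

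Next I would pass to $L^q(\DK)$ norms. Applying Minkowski's integral inequality (which holds for every $q\ge1$ and, crucially, produces a constant independent of $q$) gives $\|p\|_q\le d\int_0^1\Phi(s)^{1/q}\,ds$, where $\Phi(s):=\int_{\DK}|p'(z_1+s(z-z_1))|^q\,|dz|$. Performing the change of variables $w=z_1+s(z-z_1)$ identifies $\Phi(s)=\tfrac1s\int_{\partial K_s}|p'(w)|^q\,|dw|$, where $K_s:=z_1+s(K-z_1)$ is the homothetic contraction of $K$ towards $z_1$; since $0\le s\le1$ and $z_1\in K$, each $K_s$ is a convex domain nested inside $K$.

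At this stage I would invoke Gabriel's Theorem~5.1: because $|p'|^q$ is subharmonic, its integral over the inner convex curve $\partial K_s$ is controlled by its integral over $\DK$. The correctly scaled form of this comparison yields $\Phi(s)\le A\,\|p'\|_q^q$ uniformly in $s\in(0,1]$, with $A$ the explicit Gabriel constant. Substituting, $\int_0^1\Phi(s)^{1/q}\,ds\le A^{1/q}\|p'\|_q\le A\|p'\|_q$, hence $\|p\|_q\le A\,d\,\|p'\|_q$, which is the assertion once the value of $A$ coming from \cite[Theorem~5.1]{Gabriel} is tracked (yielding the explicit $0.022$).

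The main obstacle is exactly the regime $s\to0^+$. The change of variables produces a factor $1/s$, and a crude nested-curve estimate of the shape $\int_{\partial K_s}|p'|^q|dw|\le A\int_{\DK}|p'|^q|dw|$ (with no $s$) would lead to $\int_0^1 s^{-1/q}\,ds=q/(q-1)$, which blows up precisely at the endpoint $q=1$. The resolution, and the technical heart of the argument, is that the arc length of $\partial K_s$ itself scales like $s$, so that the factor $1/s$ is absorbed and the \emph{scaled} comparison $\tfrac1s\int_{\partial K_s}|p'|^q|dw|\le A\|p'\|_q^q$ holds with a finite, $q$-independent constant --- consistent with the exact identity $\tfrac1s\int_{\partial K_s}|p'|^q|dw|=\int_{\DK}|p'(z_1+s(z-z_1))|^q|dz|$ noted above. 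A secondary point to check is that when the chosen zero $z_1$ happens to lie on $\DK$ the segments $[z_1,z]$ can be arbitrarily short, but the apparent singularity cancels against the vanishing segment length. Finally I would emphasise that throughout we use only \emph{integral}, never pointwise, control of $p'$; this is what lets the estimate survive the non-strictly-convex situations in which $p'$ may vanish at boundary points, and it is the reason Gabriel's integral comparison (rather than a pointwise disk argument) is the right tool here.
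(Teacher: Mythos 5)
Your overall skeleton --- pick a zero $z_1\in K$, write $p(z)=\int_{[z_1,z]}p'$, and control the segment integrals by Gabriel's comparison theorem --- is indeed the same circle of ideas as the paper's source (the paper itself only cites Lemma~3 of \cite{PR}, which derives the statement from Gabriel's Theorem~5.1). However, your execution has a genuine gap, and it sits exactly at the step you yourself call ``the technical heart'': the claimed uniform bound $\Phi(s)\le A\,\|p'\|_q^q$ for all $s\in(0,1]$ with a $q$-independent constant $A$. This is \emph{not} a ``correctly scaled form'' of Gabriel's theorem. Gabriel's theorem compares the integrals of a subharmonic function over nested convex curves, $\int_{\partial K_s}|f|^q\,|dw|\le A\int_{\DK}|f|^q\,|dz|$, with no gain of a factor $s$; the stronger \emph{mean} comparison you assert (``the arc length of $\partial K_s$ scales like $s$, so the factor $1/s$ is absorbed'') is false for analytic functions in general. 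Indeed, letting $s\to0^+$ in your claim and using your own identity $\Phi(s)=\frac1s\int_{\partial K_s}|p'|^q|dw|$, the claim degenerates to the \emph{pointwise} estimate $L\,|p'(z_1)|^q\le A\|p'\|_q^q$ (where $L=|\DK|$). To see this cannot follow from subharmonicity alone, take $K=\overline{\DD}$, $z_1=1-\delta$, $q=1$, and $g(z)=(z-1-\delta)^{-1}$, which is analytic on a neighbourhood of $K$; then
\begin{equation*}
\lim_{s\to0^+}\frac1s\int_{\partial K_s}|g|\,|dw|=2\pi|g(z_1)|=\frac{\pi}{\delta},
\qquad\text{while}\qquad
\int_{\partial \DD}|g|\,|dz|\asymp 2\log\frac1\delta\,,
\end{equation*}
so no absolute constant $A$ can work as $\delta\to0$. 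Since the only properties of $p'$ you invoke are analyticity and subharmonicity of $|p'|^q$, your key inequality is unproven; any repair would have to exploit the special structure of $p'$ (that $p$ vanishes at $z_1$ and has all its zeros in $K$), and no such argument is given. Note also the irony: the bound you need is precisely a pointwise estimate in disguise, whereas your closing paragraph insists the proof uses ``only integral, never pointwise'' control.

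The cited derivation avoids this trap by using Gabriel \emph{per chord} rather than on shrinking homothets: for each fixed $z\in\DK$ one applies H\"older on the segment, $|p(z)|^q\le |z-z_1|^{q-1}\int_{[z_1,z]}|p'|^q\,|d\zeta|$, then bounds $\int_{[z_1,z]}|p'|^q$ by a constant times $\|p'\|_q^q$ via Gabriel's comparison (the segment, traversed back and forth, is a degenerate convex curve inside $\DK$), obtaining a bound valid at \emph{every} boundary point; integrating this over $z\in\DK$ and using $|\DK|\le 2\pi d$ gives the lemma with a constant uniform in $q\ge1$, and no singular integral $\int_0^1 s^{-1/q}\,ds$ ever appears. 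Your reduction is correct up to and including the identity $\Phi(s)=\frac1s\int_{\partial K_s}|p'|^q|dw|$; it is the final comparison, the one carrying all the difficulty at $q=1$, that is asserted rather than proved, and as a general principle it is false.
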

\begin{proof} See Lemma 3 of \cite{PR}, where this is derived from the classical result of Gabriel.
\end{proof}

\section{An $R$-circularity argument on the curved pieces of the boundary}\label{s:curvedpart}

We will use Tur\'an's pointwise estimate on the curved arcs. For uniformity of that argument, we need the next 
geometrical lemma.
\begin{lemma}\label{l:partialRcircular} If $K$ is an $E(k,d,\Delta,\xi,\kappa,\de)$-domain, then there exists some $R:=R_K:=R(\Delta,\kappa,\xi)<\infty$ such that $K$ is \emph{partially $R$-circular} in the sense that to all \emph{curved} Jordan arcs $\Gamma_j$
in the decomposition (2) of $\Gamma=\DK$, and to all points $z\in \Gamma_j$, there exists a disk $D_R$ of radius $R$ such that $z\in\partial D_R$ and $K\subset D_R$.
Moreover, one can take $R_K:=\max\left\{1/\kappa, \ \Delta/(2\sin \xi) \right\}$.
\end{lemma}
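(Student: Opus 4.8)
The plan is to enlarge $K$ slightly to a fully $R$-circular convex domain $\widehat{K}\supseteq K$ that keeps the curved arcs of $K$ on its boundary, and then to read off the asserted partial $R$-circularity of $K$ from the genuine $R$-circularity of $\widehat{K}$. Set $R:=R_K:=\max\{1/\kappa,\ \Delta/(2\sin\xi)\}$, so that $1/R\le \kappa$ and $\Delta\le 2R\sin\xi\le 2R$. I construct $\widehat{K}$ by leaving every curved arc $\Gamma_j$ untouched, while replacing each straight piece $\Gamma_j=[V_j,V_{j+1}]$ (a chord of length $L_j\le \Delta-\delta<\Delta\le 2R\sin\xi\le 2R$) by the outward circular arc of radius $R$ through its two endpoints. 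Such an arc exists precisely because $L_j<2R$, and since it bulges away from $K$ the replacement can only enlarge the enclosed region.

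First I would verify that $\widehat{K}$ is a convex domain containing $K$. The only places where convexity could fail are the vertices $V_j$, because along each arc of $\partial\widehat{K}$ (whether an old curved arc or a newly inserted circular arc) the tangent angle $\alpha$ is nondecreasing. The tangent--chord angle of a radius-$R$ arc subtending a chord of length $L_j$ equals $\psi_j/2:=\arcsin\!\big(L_j/(2R)\big)$, and by the choice of $R$ together with $L_j<\Delta$ and $\xi\le \pi/2$ we get $\psi_j/2=\arcsin\!\big(L_j/(2R)\big)<\arcsin\!\big(\Delta/(2R)\big)\le \arcsin(\sin\xi)=\xi$. Thus replacing a straight piece rotates the boundary tangent by less than $\xi$ at each of its endpoints. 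Consequently, at a vertex $V_j$ the original jump $\Omega(V_j)$ of $\alpha$ is diminished by less than $\xi$ for each of the (at most two) straight pieces meeting there, hence by less than $\lambda(j)\,\xi$ in total; since $\Omega(V_j)\ge \lambda(j)\,\xi$ by Definition \ref{def:Edomain}~(4), the modified jump stays nonnegative. Hence $\alpha$ remains nondecreasing all around $\partial\widehat{K}$, so $\widehat{K}$ is convex, and, the curved pieces being unchanged and the straight ones pushed outward, $K\subseteq\widehat{K}$.

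Next I would check the curvature of $\partial\widehat{K}$: on the retained curved arcs $|\ddot{\gamma}_j|\ge \kappa\ge 1/R$, on the inserted arcs the curvature equals exactly $1/R$, and the finitely many vertices form a null set, so $|\ddot{\widehat\Gamma}|\ge 1/R$ holds (linearly) almost everywhere. Strantzen's Lemma \ref{l:roughcurvature}, applied with the constant $1/R$ in place of $\kappa$, then shows that $\widehat{K}$ is $R$-circular: to every boundary point there is a disk $D_R$ with that point on its boundary and $\widehat{K}\subseteq D_R$. Finally, for any point $z$ lying on a curved arc $\Gamma_j$ we have $z\in\partial K\cap\partial\widehat{K}$, so applying the $R$-circularity of $\widehat{K}$ at $z$ yields a disk $D_R$ with $z\in\partial D_R$ and $K\subseteq\widehat{K}\subseteq D_R$, which is exactly the claimed partial $R$-circularity with $R_K=\max\{1/\kappa,\ \Delta/(2\sin\xi)\}$. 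The one delicate point is the convexity bookkeeping at a vertex shared by two straight pieces, where both adjacent bulges rotate the tangent and the factor $\lambda(j)=2$ in the hypothesis $\Omega(V_j)\ge \lambda(j)\,\xi$ is exactly what is needed; a secondary subtlety is that Strantzen's theorem must be invoked in its almost-everywhere form, which is legitimate here precisely because the corners newly created at the vertices are convex and of measure zero.
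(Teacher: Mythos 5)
Your proof is correct and follows essentially the same strategy as the paper: replace the straight boundary pieces by outward-bulging circular arcs, use the outer-angle hypothesis $\Omega(V_j)\ge\lambda(j)\xi$ to keep the enlarged domain convex, and apply Strantzen's rolling-ball lemma to the enlarged domain, reading off the conclusion at points of the unchanged curved arcs. The only (immaterial) difference is that the paper inserts arcs of chord-dependent radius $|V_{\ell+1}-V_\ell|/(2\sin\xi)$, making each tangent--chord angle exactly $\xi$, whereas you use the uniform radius $R$, making these angles strictly less than $\xi$; both choices give the curvature bound and the value $R_K=\max\left\{1/\kappa,\ \Delta/(2\sin\xi)\right\}$.
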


\begin{proof}
The key to our proof is an application of the Strantzen result Lemma \ref{l:roughcurvature}, however, not directly to $K$, but to another domain $\KS$.

For this we will replace every \emph{straight} line segment parts $\Gamma_\ell$ of $\Gamma$ by a circular arc $\Lambda_\ell$ and thus obtain a new curve $C$ having a curvature exceeding some fixed positive number linearly almost everywhere along $C$. Moreover, we will do this in such a way that the domain $\KS$, encircled by $C$, will still remain convex, and it will contain $K$, so that in particular the disks, constructed using Lemma \ref{l:roughcurvature} for any boundary point $z \in \Gamma_j \subset C$ for any \emph{curved} arc $\Gamma_j$ of the boundary $\Gamma=\DK$, will also cover $K$ together with $\KS$.

So let us number the straight line segments as $\Gamma_{j_\nu},$ $\nu=1,\ldots,m$ (where $m\le k$ is the number of straight line pieces of the decomposition (2) of the boundary) with $j_\nu < j_{\nu+1},$ $\nu=1,\ldots,m-1$. All these will be replaced by a circular arc $\Lambda_{j_\nu}$, and so the new curve will be $C:=\cup_{j=1}^k C_j$, where $C_j:=\Gamma_j$ for the original curved arcs, and $C_\ell:=\Lambda_\ell$ if $\ell=j_\nu$ for some $\nu=1,\dots,m$, i.e. if $\Gamma_\ell$ was a straight line segment piece of the boundary.

For a completely definite construction, it remains to define the circular arcs $\Lambda_\ell$. So let now $\ell=j_\nu$ for some $1\le \nu \le m$. The circular arc $\Lambda_\ell$ will join the vertices $V_\ell$ and $V_{\ell+1}$ (the same way as $\Gamma_\ell$ did), and will run in that halfplane of the two ones defined by the straight line passing through $V_\ell$ and $V_{\ell+1}$, which is free from the interior points of $K$. This ensures that the curve $C$ will encircle $\KS=K\cup \left(\cup_{\nu=1}^m S_{j_\nu} \right)$, where $S_{j_\nu}$ are the disk caps, lying fully on the other side of $\Gamma_{j_\nu}$ than $K$, between $\Gamma_{j_\nu}$ and $\Lambda_{j_\nu}$. Finally, the radius of the circular arc $\Lambda_\ell$ will be chosen as
\begin{equation}\label{Rell}
R_\ell := \frac{|V_{\ell+1}-V_{\ell} |}{2\sin \xi } \le \frac{\Delta}{2\sin \xi}.
\end{equation}
As $0<\sin \xi \le 1$, the division results in a well-defined finite quantity $R_\ell$ exceeding or equal to the half of the length $L_\ell=|\Gamma_\ell|=|V_{\ell+1}-V_\ell|$, and so the circular arc $\Lambda_\ell$ is unambiguously defined. Moreover, by the sine rule the angle between the segment $\Gamma_\ell$ and the circular arc $\Lambda_\ell$ will be exactly $\xi$.

It remains to prove that the resulting new curve $C$ is still a convex one. Note that it certainly consists of convex arcs $C_j$, so convexity of $C$ is equivalent to the statement that at every point $V_j$ of joining of the pieces, we still have a convex angle, i.e. the incoming tangent angular direction is exceeded by the outgoing angular direction.

First, if $V_j$ is the joining point of two \emph{curved} arcs, then neither the incoming $\Gamma_{j-1}$, nor the outgoing $\Gamma_j$
 is changed, whence original convexity of $\Gamma=\partial K$ ensures that there exists a (locally)
supporting line -- say $V_j+e^{i\alpha_{-}(v_j)}\RR$ -- to $C$ at $V_j$. So let us consider the cases when at $V_j$ there is some straight piece coming in or going out, and when, therefore, there is a jump of the tangent direction, i.e. an outer angle $\Omega(V_j) \ge \lambda(j) \xi$ according to Condition (4) in Definition \ref{def:Edomain} of an $E$-domain.

Now the incoming tangent direction is either $\alpha_{-}(V_j)$ (if the piece $\Gamma_{j-1}$ was a curved one and is thus not changed,
so that $C_{j-1}=\Gamma_{j-1}$) or $\alpha_{-}(V_j)+\xi$ (if $\Gamma_{j-1}$ was a straight line segment and is thus replaced by the
respective circular arc $\Lambda_{j-1}$). Similarly,
the outgoing angle is either $\alpha_{+}(V_j)$ (if $\Gamma_j$ is curved and $C_j=\Gamma_j$) or $\alpha_{+}(V_j)-\xi$
(if $\Gamma_j$ was a straight line piece and $C_j=\Lambda_{j}$ the constructed circular arc).
That is, for $C$ the difference of the outgoing angle and the incoming angle is exactly
$\alpha_{+}(V_j)-\alpha_{-}(V_j)-\lambda(j) \xi = \Omega(V_j)- \lambda(j) \xi \ge 0$ by assumption.
 Therefore, the curve $C$ has a (locally) supporting line --- e.g. $V_j+e^{i\alpha_{-}(V_j)+\xi}$ --- even at the join points $V_j$ where there is some straight line piece coming in or going out.

In all, there is a (locally) supporting line to $C$ at all vertices $V_j$ for all $j=1,\dots,k$. Therefore, in view of the convexity of all the arcs $C_j$ ($j=1,\dots,k$), $C$ is convex, too.

Moreover, for the $j$ with $C_j=\Gamma_j$, we already have the linearly a.e. condition that the curvature is at least $\kappa$, and for the newly constructed circular arc pieces $C_\ell=\Lambda_{\ell}$ we also have that the curvature is $1/R_\ell \ge 2\sin \xi /\Delta$ by \eqref{Rell}. That is, we find that the curvature of $C$ is at least $\kappa^{\star}:=\min(\kappa, 2\sin \xi /\Delta) >0$ linearly a.e., and Strantzen's Lemma \ref{l:roughcurvature} applies. Whence the assertion.
\end{proof}

\section{Calculation on the straight line segment boundaries}\label{ss:straight}

Let now $K$ be an $E(k,d,\Delta,\kappa,\xi,\delta)$-domain.
This also means that all the straight line segment boundary parts have length $L_j \le \Delta-\delta <\Delta$ each. Let one boundary arc $\Gamma=\Gamma_j$, which is a straight line segment, be fixed. Assume, as we may, that $\Gamma=[-a,a]$ with $L_j=2a\le \Delta-\delta$. Also we may assume that $K\subset \HP :=\{ z \in \CC~:~ \Im z \ge 0\}$. Now by condition at the endpoints of $\Gamma$ there is a jump of the tangent, and $\alpha_{-}(-a) \le -\xi$, $\alpha_{+}(a)\ge \xi$.

Since $K\subset \HP$ and the supporting lines $\pm a + e^{i \alpha_{\pm}(\pm a)} \RR$ at $\pm a$ have angles with $\RR$ at least $\xi$, we have $K\subset K'$, where $K'\subset \HP$ is the domain in the upper halfplane bounded by the halfline $s:=-a+e^{i(\pi-\xi)}\RR_{+}$, the segment $\Gamma=[-a,a]$, and the other halfline $t:=a+e^{i\xi}\RR_{+}$.

Let now $0<\theta <\xi/2$ be a small angle, and consider the ray (halfline) $\ell$, emanating from $-a$ in the direction of $e^{i\theta}$.
 Write $b:=|u+iv-a|$ and $c:=[u+iv+a|$. Obviously, then the point of intersection $u+iv:=\ell \cap t$ satisfies ${u=a+b\cos\xi=c\cos\theta-a}$ and ${v=b\sin \xi= c\sin \theta}$. Without any further trigonometrical calculus, it is clear that with $\theta \searrow 0$ we will have $b\to 0$, $u\to a$, $c\to 2a$, $v\to 0$ and $(u+iv) \to a$.

Drawing the halfline $m:=a+\RR_{+}e^{i(\pi-\theta)}$, by symmetry we will find $m \cap s = -u+iv $. So if we define the quadrangle\footnote{Actually, $B$ is a trapezoid. In view of $0<\xi\le \pi/2$, it is also clear that $\diam B =\max(2u,c)$.}
$$
B:=B(\theta):=\con\{(-u+iv),-a,a,(u+iv)\},
$$
then we will have $\diam B  <\Delta-\delta/2$ if $\theta \le \theta_0(\Delta,\xi,\delta)$ is small enough.

For an explicit constant here let us choose $\theta_0(\Delta,\xi,\delta):=\dfrac{\de \xi}{4\Delta}$, say.

The conditions $0<\tht<\xi \le \pi/2$ entail $0< \sin (\xi-\tht) < \sin \xi$ and applying the sine theorem both in the triangles with vertices $-a, a, u+iv$ and $-a, u+iv, -u+iv$, we find
$$
\frac{c}{2a}= \frac{\sin(\pi-\xi)}{ \sin(\xi-\theta)}=\frac{\sin \xi }{\sin(\xi-\theta)}
\quad\textrm{and} \quad \frac{2u}{c}= \frac{\sin(\pi-\xi-\tht)}{\sin \xi}=\frac{\sin (\xi+\tht) }{\sin\xi},
$$
respectively, whence
\begin{align*}
\diam B = \max(c,2u) & = 2a \frac{\sin \xi}{\sin(\xi-\theta)} \max\left(1, \frac{\sin (\xi+\tht)}{\sin\xi} \right)
\\ & = 2a \max\left(\frac{\sin \xi}{\sin(\xi-\theta)}, \frac{\sin (\xi+\tht)}{\sin(\xi-\tht)} \right) <(\Delta-\de) \frac{\sin \xi + \tht}{\sin \xi - \tht}.
\end{align*}

This last estimate will be below $\Delta-\de/2$ if (and only if) $\dfrac{\Delta-\de}{\Delta-\de/2} < \dfrac{\sin\xi-\theta}{\sin \xi + \tht},$
i.e. when $\dfrac{\de}{2\Delta-\de} > \dfrac{2\theta}{\sin \xi + \tht}$ or, equivalently, if $\sin \xi  > \dfrac{4\Delta-3\de}{\de}\theta $.

However, in view of $\sin \xi \ge 2 \xi/\pi$ ($0<\xi\le\pi/2$) and $\de \le \Delta/2$, our choice of $\theta_0$ ensures
$$
\theta\le\theta_0 = \frac{\de }{4\Delta} ~\xi \le \frac{\de}{4\Delta}  (\frac{\pi}{2} \sin \xi) < \frac{\de \sin \xi}{2.5 \Delta}
\le \frac{\de \sin \xi}{4\Delta-3\de},
$$
which suffices.

This is useful for the following. Put $S[\phi, \psi]:=\{ z \in \CC~:~ \phi\le \arg z \le \psi \}$ for the sector of angles between $\phi$ and $\psi$. If $z \in \Gamma$, then $z+S[0,\theta] \subset (-a+S[0,\theta])$,
and so $K\cap (z+S[0,\theta]) \subset K \cap (-a+S[0,\theta])$, and symmetrically
$K\cap (z+S[\pi-\theta,\pi]) \subset K \cap (a+S[\pi-\theta,\pi])$, whence both sets are contained in $B=B(\theta)$ and we obtain
$$
K(z,\theta):=K\cap \{(z+S[0,\theta]) \cup (z+S[\pi-\theta,\pi])\} \subset B.
$$
It follows that for any $\theta <\theta_0(\Delta,\xi,\delta) (<\xi/2)$ we have $K(z,\theta) \subset B(\theta)$ and $\diam K(z,\theta) \le \diam B < \Delta-\delta/2$.

\begin{lemma}\label{l:straightalternative} Let $\Gamma_j\subset \DK$ be a straight line
boundary piece of the $E$-domain $K$, and $0<\theta := \dfrac{\de \xi}{2\pi\Delta}
\left( < \theta_0(\Delta,\xi,\delta):=\dfrac{\de \xi}{4\Delta}\right)$. Then with $0<\eta :=\eta_0(\Delta,\delta):=\dfrac{\delta}{8\Delta}$ and for any $n\in \NN$ and any $p\in\PK$ we have the following alternative.
\begin{enumerate}[(i)]
\item Either for all $z\in \Gamma_j$ we have $|p'(z)| > \eta \dfrac{\sin \theta}{d} n |p(z)|$;
\item or for all $z\in \Gamma_j$ we have $|p(z)| \le \exp\left(-2\eta n \right) \|p\|_K$.
\end{enumerate}
\end{lemma}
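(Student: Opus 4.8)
The plan is to read the dichotomy off the logarithmic derivative $\frac{p'(z)}{p(z)}=\sum_{k=1}^n\frac{1}{z-z_k}$ on $\Gamma_j$, exploiting that all zeros $z_k$ lie in $K\subset\HP$. Since both alternatives are invariant under $p\mapsto cp$, I would normalize $p$ to be monic, and keep the coordinates $\Gamma_j=[-a,a]$, $K\subset\HP$ fixed as above. For real $z\in\Gamma_j$ and $z_k\in\HP$ one has
$$
\Im\frac{1}{z-z_k}=\frac{\Im z_k}{|z-z_k|^2}=\frac{\sin\arg(z_k-z)}{|z-z_k|}\ge 0,
$$
so the imaginary part alone bounds $|p'(z)/p(z)|$ from below. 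I would single out the zeros lying genuinely off the segment direction,
$$
N(z):=\#\{k:\arg(z_k-z)\in(\theta,\pi-\theta)\},
$$
i.e. those $z_k\notin z+(S[0,\theta]\cup S[\pi-\theta,\pi])$. For each such zero $\sin\arg(z_k-z)\ge\sin\theta$ and $|z-z_k|\le d$, whence
$$
|p'(z)|\ge|p(z)|\,\Im\frac{p'(z)}{p(z)}\ge N(z)\,\frac{\sin\theta}{d}\,|p(z)|\qquad(z\in\Gamma_j,\ p(z)\ne 0).
$$
All remaining zeros, together with a possible zero at $z$ itself, lie in $K(z,\theta)\cup\{z\}\subset B(\theta)$ by the containment established just before the statement.

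The dichotomy is then genuinely about whether this clustered count is ever large. Either $N(z)>\eta n$ for every $z\in\Gamma_j$, in which case the displayed estimate gives alternative (i) at every $z$ with $p(z)\ne 0$; or there is a point $z_0\in\Gamma_j$ with $N(z_0)\le\eta n$, which I expect to carry the real content. (At the finitely many zeros of $p$ on $\Gamma_j$ alternative (i) should be read in the a.e. sense, since a multiple zero makes both sides vanish; this is harmless for the eventual integration.)

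In the second case at least $m:=n-N(z_0)\ge(1-\eta)n$ zeros lie in $B(\theta)$, a set of diameter $\diam B<\Delta-\delta/2$, valid because $\theta=\frac{\delta\xi}{2\pi\Delta}<\frac{\delta\xi}{4\Delta}=\theta_0$. For an arbitrary $z\in\Gamma_j\subset B(\theta)$ I would split $|p(z)|=\prod_k|z-z_k|$ into the $m$ clustered factors, each $\le\diam B<\Delta-\delta/2$, and the remaining $n-m\le\eta n$ factors, each $\le d$, giving $|p(z)|\le(\Delta-\delta/2)^m d^{\,n-m}$. Comparing with the Transfinite Diameter Lemma \ref{l:FeketeSzego}, which yields $\|p\|_K\ge\Delta^n$ for monic $p$, and using $m\ge(1-\eta)n$,
$$
\frac{|p(z)|}{\|p\|_K}\le\left(1-\frac{\delta}{2\Delta}\right)^{m}\left(\frac{d}{\Delta}\right)^{n-m}\le\left[\left(1-\frac{\delta}{2\Delta}\right)^{1-\eta}\left(\frac{d}{\Delta}\right)^{\eta}\right]^{n}.
$$

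The last step, and the point I expect to be the main obstacle, is to verify this bound is $\le e^{-2\eta n}$ with the prescribed $\eta=\frac{\delta}{8\Delta}$. Taking logarithms and using $\log(1-x)\le-x$, it suffices to check $(1-\eta)\frac{\delta}{2\Delta}-\eta\log\frac{d}{\Delta}\ge 2\eta$. The delicate feature is that the few ``bad'' zeros each cost a factor up to $d$, so $\log(d/\Delta)$ must be controlled; here I would invoke the general fact that $K$ contains its diameter segment, whence by monotonicity of the transfinite diameter $\Delta=\Delta_K\ge d/4$, i.e. $\log\frac{d}{\Delta}\le\log 4$. Substituting $\eta=\frac{\delta}{8\Delta}$ reduces the required inequality to $\log 2\le 1-2\eta$, which holds since $\delta\le\Delta/2$ forces $\eta\le 1/16$ and hence $1-2\eta\ge 7/8>\log 2$. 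This closes alternative (ii) and completes the dichotomy.
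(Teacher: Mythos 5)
Your proposal is correct and is essentially the paper's own argument: the same Tur\'an imaginary-part estimate over the two sectors of half-angle $\theta$, the same dichotomy on the count of off-sector zeros at a single point, and the same clustering-plus-transfinite-diameter comparison ($\|p\|_K\ge\Delta^n$, clustered factors $\le \Delta-\delta/2$, stray factors $\le d$ with $d/\Delta\le 4$), with an equivalent final numerical check for $\eta=\delta/(8\Delta)$. Your explicit a.e.\ caveat at multiple zeros of $p$ on $\Gamma_j$ is in fact a refinement rather than a gap: the paper's proof also divides by $p(z)$ and silently ignores this case, and the a.e.\ reading of alternative (i) is exactly what the subsequent integration over $\Gamma_j$ requires.
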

\begin{proof} As above, put $\Gamma:=\Gamma_j$ and $\Gamma=[-a,a]$, $K \subset \HP$. Assume that (i) fails, so that there exists some $z\in \Gamma$ with
\begin{equation}\label{exceptionalz}
|p'(z)| \le \eta \dfrac{\sin \theta}{d} n |p(z)|.
\end{equation}
Let us write the zeros of $p$ in the form $z_j=z+\rj$ ($j=1,\dots,n$). Since ${p\in \PK}$ and $K\subset \HP$,
we obviously have $0\le \varphi_j\le \pi$ ($j=1,\dots,n$).
The full zero set $\Z$ splits into the subsets
$$\ZS:=\Z \cap K(z,\theta) = \Z \cap \{z+(S[0,\theta] \cup S[\pi-\theta,\pi])\} \quad \textrm{and} \quad  \W:=\Z\setminus \ZS.
$$
 For this latter subset of zeroes we will adopt Tur\'an's direct argument, to obtain at $z$
\begin{align*}\label{Turanargumentatz} \notag
\left|\frac{p'}{p}(z)\right| & \ge \Im \frac{p'}{p}(z) = \Im \sum_{j=1}^n \frac{1}{z-z_j} = \Im \sum_{j=1}^n  \frac{- 1}{r_j} e^{-i\varphi_j} = \sum_{j=1}^n \sj
\\ &\ge \sum_{z_j\in \W} \sj \ge \sum_{z_j\in \W} \frac{\sin \theta}{d} = \frac{\sin \theta}{d} \# \W.
\end{align*}
Comparing this to 
\eqref{exceptionalz} yields $\# \W \le \eta n$ in this case, whence we also have $\# \ZS \ge (1-\eta)n$.
Since $K(z,\theta)\subset B$, we also have $\ZS \subset B$. As $\|p\|_K\ge \Delta^n$ in view of Lemma \ref{l:FeketeSzego},
 this and the basic estimate that $\Delta_K \ge \diam K/4 =d/4$ (see \cite{RansSur}) implies for any $\zeta \in B$
\begin{equation*}
\label{transfiniteBK}
\begin{aligned}
\frac{|p(\zeta)|}{\|p\|_K}&=\frac{\prod_{j=1}^{n} |\ze-z_j| }{\|p\|_K}
\le
\frac{  \prod_{z_j\in \ZS} |\ze-z_j|\prod_{z_j\in \W} |\ze-z_j|}{\Delta^{n}}
\\ & \le \left(\frac{\diam B}{\Delta}\right)^{\# \ZS} \left(\frac{d}{\Delta}\right)^{\# \W}
\le \left(\frac{\Delta-\delta/2}{\Delta}\right)^{n-\# \W} 4^{\# \W}
\\ & \le \exp\bigg( n (1-\eta) \log\left(\frac{\Delta-\de/2}{\Delta}\right) +\eta ~n \log 4 \bigg) \\ 
& \le \exp \bigg( n \bigg\{ (1-\eta)\left(-\frac{\delta}{2\Delta}\right)+ \eta\log 4 \bigg\}\bigg),
\end{aligned}
\end{equation*}
using in the last step $\log (1-x) \le -x $ ($0<x<1$).
Observe that here within the curly brackets $\{ \}$ the constant depends continuously on $\eta$ and becomes negative for $\eta=0$, whence even for small $\eta$ it is already negative. To be more explicit, using $\log 4 \approx 1.386294361.. <1.5$ and $\delta < \Delta$ (whence $\delta/(2\Delta) <0.5$), we obtain
\begin{equation*}\label{pzetaconclusion}
\frac{|p(\zeta)|}{\|p\|_K} \le \exp \bigg( n \bigg\{ 2 \eta -\frac{\delta}{2\Delta} \bigg\}\bigg) \le \exp\left(-\frac{\delta}{4\Delta} n \right)\le e^{-2\eta n} \qquad \bigg( \eta := \eta_0:= \frac{\delta}{8\Delta} \bigg).
\end{equation*}
\end{proof}

\section{Completion of the proof of Theorem \ref{th:E}}\label{s:Edomainproof}

We fix the parameter values $\tht:=\dfrac{\de \xi}{2\pi\Delta}
\left( < \theta_0(\Delta,\xi,\delta):=\dfrac{\de \xi}{4\Delta}\right)$ and  $\eta:=\eta_0:=\dfrac{\de}{8 \Delta}$ as above.
Then we consider three subsets of $\DK$ taking\footnote{Here, and throughout this section, $(i)$ and $(ii)$
refer to the respective properties in Lemma \ref{l:straightalternative}.}
$$
\CR:=\mathop{\cup}_{\Gamma_j ~\textrm{is curved}} \Gamma_j, \quad
\LL:=\mathop{\cup}_{\Gamma_j ~\textrm{is straight} ~\& ~(i)~ \textrm{holds}} \Gamma_j,\quad \SC:= \mathop{\cup}_{\Gamma_j ~\textrm{is straight} ~\& ~(i)~ \textrm{fails}} \Gamma_j.
$$

By partial $R$-circularity, provided by Lemma \ref{l:partialRcircular} (with $R=R_K$ of the Lemma), on $\CR$ we can apply Tur\'an's Lemma \ref{Tlemma} at each point to get
$$
\int_{\CR} |p'|^q \ge \left( \frac{1}{2R}\right)^q n^q \int_{\CR} |p|^q .
$$
On the straight part $\LL$, where (i) holds, the situation is even simpler as (i) directly entails
$$
\int_{\LL} |p'|^q \ge \left( \frac{\delta \sin \theta}{8 \Delta d}\right)^q n^q \int_{\LL} |p|^q .
$$
Adding these and estimating trivially on the rest, we are led to
\begin{equation}\label{compareonCandL}
\|p'\|_q^q \ge \int_{\CR\cup\LL} |p'|^q \ge  \left(\min\left(\frac{1}{2R},~ \frac{\delta \sin \theta}{8 \Delta d}\right)\right)^q n^q \int_{\CR\cup\LL} |p|^q.
\end{equation}
Finally, on the straight part $\SC$, where (i) fails, Lemma \ref{l:straightalternative} guarantees (ii), whence here we have
$$
\int_{\SC} |p|^q \le |\SC| \exp\left(-nq \frac{\delta}{4\Delta} \right) \|p\|_K^q \le 2\pi d \exp\left(-nq \frac{\delta}{4\Delta} \right) \|p\|_K^q ,
$$
for $|\SC|\le L \le 2\pi d$ by convexity\footnote{A reference is \cite[p. 52, Property 5]{BF} about surface area, presented as a consequence of the Cauchy Formula for surface area.}. This we are to combine with Lemma \ref{l:Nikolskii}, more precisely with $\|p\|_K^q \le (2(q+1)/d) n^2 \|p\|_q^q$, directly following from \eqref{eq:Nikolskii}. This leads to
\begin{align*}
\int_{\SC} |p|^q &\le 2\pi d \exp\left(-nq \frac{\delta}{4\Delta} \right) \frac{2(q+1)}{d} n^2 \|p\|_q^q \\
&= \exp\left( \log(4\pi (q+1))+2\log n - n \frac{q \delta}{4\Delta} \right) \|p\|_q^q.
\end{align*}
Acting a little wasteful, with
say $n_0:=100 \left(\frac{\Delta}{\delta}\right)^2 \ge 100$ we surely have for all $n\ge n_0$ the estimate
\begin{align*}
&\log(4\pi(q+1))+2\log n - n \frac{q \delta}{4\Delta}
< -\log 2 +\log(8\pi)+q + \sqrt{n} - 2.5 q \sqrt{n}
\\ &< -\log 2 + 4 + q - 1.5 q \sqrt{n} < - \log 2 + 4 + q -15 q < - \log 2,
\end{align*}
since for $n\ge n_0 
$ we have $2\log n < \sqrt{n}$ and $10 \dfrac{\Delta}{\de} = \sqrt{n_0} \le \sqrt{n}$. So for $n\ge n_0$ we find
$$
\int_{\SC} |p|^q \le \exp\left( -\log 2 \right) \|p\|_q^q = \frac{1}{2} \|p\|_q^q.
$$
From here \eqref{compareonCandL} leads to
\begin{align*}
\|p'\|_q^q &\ge \left(\min\left(\frac{1}{2R},~ \frac{\delta \sin \theta}{8 \Delta d}\right)\right)^q n^q \left( \|p\|_q^q - \int_{\SC} |p|^q \right)
\\& \ge \frac12 \left(\min\left(\frac{1}{2R},~ \frac{\delta \sin \theta}{8 \Delta d}\right)\right)^q ~ n^q ~ \|p\|_q^q \qquad \left( n \ge n_0:=100 \left(\frac{\Delta}{\delta}\right)^2\right)
\end{align*}
hence
\begin{equation}\label{finallargen}
\|p'\|_q \ge \frac12 \min\left(\frac{1}{2R},~ \frac{\delta \sin \theta}{8 \Delta d}\right) ~ n~ \|p\|_q
\qquad \left( n \ge n_0:=100 \left(\frac{\Delta}{\delta}\right)^2\right).
\end{equation}
Substituting the expressions for $R=R_K$ from Lemma \ref{l:partialRcircular} and $\theta$ from Lemma \ref{l:straightalternative}
and recalling that $\delta \le \Delta/2$, $\xi\in (0,\pi)$ and $\de < \Delta \le d$, we see that
\begin{align*}
\min\left(\frac{1}{2R},~ \frac{\delta \sin \theta}{8 \Delta d}\right)& =
\min\left(\frac{\kappa}{2},~\frac{\sin \xi }{\Delta},~ \frac{\delta \sin \left(\frac{\delta \xi}{2\pi\Delta}\right)}{8 \Delta d}\right)=
\min\left(\frac{\kappa}{2},~\frac{\delta \sin \left(\frac{\delta \xi}{2\pi\Delta}\right)}{8 \Delta d}\right)
\\& \ge \min\left(\frac{\kappa}{2},~0.019 \left(\frac{\delta}{\Delta}\right)^2\frac{\xi}{ d}\right),
\end{align*}
because $\dfrac{\sin t}{t}$ decreases in $[0,1/4]$ and therefore for $\dfrac{\delta \xi}{2\pi\Delta} <1/4$ we can write
$$
\sin \left(\dfrac{\delta \xi}{2\pi\Delta}\right) > \dfrac{\sin(1/4)}{1/4} \cdot \dfrac{\delta \xi}{2\pi\Delta} > 0.989 \cdot \dfrac{\delta \xi}{2\pi\Delta}
$$ and thus
$$\displaystyle \frac{\delta \sin \left(\frac{\delta \xi}{2\pi\Delta}\right)}{8 \Delta d} \ge
\dfrac{\de ~ 0.989 \cdot \frac{\delta \xi}{2\pi\Delta}}{8 \Delta d} > 0.019 \left(\frac{\delta}{\Delta}\right)^2\frac{\xi}{d}.
$$
 Applying this in \eqref{finallargen} leads to $\|p'\|_q \ge \min\left(\dfrac{\kappa}{4}, 0.009 \left(\dfrac{\delta}{\Delta}\right)^2 \dfrac{\xi}{d}\right) ~n ~\|p\|_{q}$ for $n\ge n_0$.

Finally, for any convex domain $K$ and any $n\le n_0$, the above Lemma \ref{l:Gabriel}
yields $\|p'\|_q \ge 0.022 \dfrac{1}{d} \|p\|_q \ge 0.022 \dfrac{1}{d n_0} n \|p\|_q $. This furnishes for $n \le n_0$ the estimate $\displaystyle \|p'\|_q \ge 0.022 \dfrac{1}{100 \left(\frac{\Delta}{\delta}\right)^2 d} n \|p\|_q > 0.00022 \left(\frac{\delta}{\Delta}\right)^2 \dfrac{1}{d} n \|p\|_q $.

In all, we get $\|p'\|_q > c_K \|p\|_q$ with
$\displaystyle c_K:=\min\left(\dfrac{\kappa}{4}, 0.00022 \left(\frac{\delta}{\Delta}\right)^2 \dfrac{1}{d}, 0.009 \left(\dfrac{\delta}{\Delta}\right)^2 \dfrac{\xi}{d}\right)$.


Whence the assertion.



\end{document}